\numberwithin{equation}{section}
\theoremstyle{definition}
 \newtheorem*{problem}{Open problem}
\newcommand\ud{\,\mathrm{d}}
\newcommand\ee{\mathrm{e}}
\DeclareMathOperator{\EE}{\mathbb{E}}
\DeclareMathOperator{\PP}{\mathbb{P}}
\newcommand{\Ee}{\mathcal{E}}
\newcommand{\Ff}{\mathcal{F}}
\newcommand{\Hh}{\mathcal{H}}
\newcommand{\DD}{\mathbb{D}}
\newcommand{\HH}{\mathbb{H}}
\newcommand{\NN}{\mathbb{N}}
\newcommand{\RR}{\mathbb{R}}
\newcommand{\ZZ}{\mathbb{Z}}
\newcommand{\HHP}{\mathbb{H}_\mathrm{P}}
\newcommand{\DDP}{\mathbb{D}_\mathrm{P}}
\newcommand{\DDK}{\mathbb{D}_\mathrm{K}}
\newcommand{\tW}{\widetilde{W}}
\newcommand{\oEe}{\overline{\Ee}}
\newcommand{\1}[1]{{\mathbbm{1}\mkern -1.5mu}{\{#1\}}}
\newcommand{\expo}[1]{{\mathrm{Exp} ( #1 )}}
\newcommand{\RP}{{\mathbb R}_+}
\newcommand{\unif}{\mathrm{Unif}\mkern2mu}
\DeclareMathOperator{\Var}{\mathbb{V}\mathrm{ar}}
\DeclareMathOperator{\arctanh}{arctanh}
\DeclareMathOperator{\conv}{conv}
\DeclareMathOperator{\perim}{perim}
\newcommand{\as}{\ \text{a.s.}}
\newcommand{\eqd}{\overset{{d}}{=}}
\theoremstyle{plain}
\newtheorem{theorem}{Theorem}[section]
\theoremstyle{remark}
\newtheorem{remark}[theorem]{Remark}
\newtheorem{example}[theorem]{Example}
\theoremstyle{plain}
\newtheorem{corollary}[theorem]{Corollary}
\newtheorem{lemma}[theorem]{Lemma}
\newtheorem{proposition}[theorem]{Proposition}
\numberwithin{equation}{section}
\title{Perimeter length of the convex hull of Brownian motion\\ in the hyperbolic plane}
\author{Chinmoy Bhattacharjee\footnote{\raggedright Department of Mathematics, University of Hamburg, Germany;~\href{mailto:chinmoy.bhattacharjee@uni-hamburg.de}{\texttt{chinmoy.bhattacharjee@uni-hamburg.de}}.} 
\and Rik Versendaal\footnote{\raggedright Delft Institute of Applied Mathematics, TU Delft,   Netherlands;~\href{mailto:r.versendaal@tudelft.nl}{\texttt{r.versendaal@tudelft.nl}}.} 
\and Andrew Wade\footnote{\raggedright Department of Mathematical Sciences, Durham University,  UK;~\href{mailto:andrew.wade@durham.ac.uk}{\texttt{andrew.wade@durham.ac.uk}}.}}
\date{\today}
\begin{document}

\maketitle

\begin{abstract}
We relate the expected hyperbolic length of the perimeter of the convex hull 
of the trajectory of Brownian motion in the hyperbolic plane 
to an expectation of a certain exponential functional of a one-dimensional real-valued Brownian motion, and hence derive small- and large-time asymptotics for the expected hyperbolic perimeter. In contrast to the case of Euclidean Brownian motion with non-zero drift, the large-time asymptotics are a factor of two greater than the lower bound implied by the fact that the convex hull includes the hyperbolic line segment from the origin to the endpoint of the hyperbolic Brownian motion. We also obtain an exact expression for the expected perimeter length after an independent exponential random time.
\end{abstract}

\medskip

\noindent
{\em Key words:} Hyperbolic Brownian motion, convex hull, perimeter length, Cauchy formula, hyperbolic stochastic geometry, exponential functional of Brownian motion.

\medskip

\noindent
{\em AMS Subject Classification:}  60J65 (Primary) 60D05 (Secondary).

\section{Introduction}
\label{sec:intro}

\subsection{Hyperbolic Brownian motion}
\label{sec:hyp-BM}

The hyperbolic plane $\HH^2$ is
the unique complete, simply-connected two-dimensional Riemannian manifold with constant curvature~$-1$. A point $o \in \HH^2$ is distinguished as the origin, and the hyperbolic distance $d_\HH$ equips $\HH^2$ with a norm and a metric. 
For distinct points $x, y \neq o$, there are unique hyperbolic geodesics
between $o, x$ and between $o,y$, and these geodesics subtend an angle at $o$ which is zero only if $x$ lies on the geodesic from $o$ to $y$, or vice versa. 
Every point $x \in \HH^2 \setminus \{ o \}$ can be represented in geodesic polar coordinates
$ x = (r,\theta)$, where $r = d_\HH (o, x) \in (0,\infty)$, and $\theta \in [0,2\pi)$ is an angle relative to a fixed reference direction. 

\emph{Hyperbolic Brownian motion} is a stochastic process $B:=(B_t)_{t \in \RP}$
on $\HH^2$,  started from $B_0 := o$,
whose dynamics can be described in geodesic polar coordinates $B_t = (R_t, \theta_t) \in \RP \times [0,2\pi)$, $t > 0$.
The $\RP$-valued \emph{radial process} $R := (R_t)_{t \in\RP}$
starts from $R_0:=0$ and 
satisfies the autonomous stochastic differential equation (SDE),
in which  $W^R$ is a standard $\RR$-valued Brownian motion,
\begin{align}
\label{eq:r-sde}
    \ud R_t & = \frac{\ud t }{2 \tanh R_t} + \ud W_t^R , \text{ for all } t \in \RP. \end{align}
Since $B_0 = o$, the angle
 $\theta_0$ is not canonically defined, but for $t \geq s > 0$
 the increment $\theta_t - \theta_s$ is described via 
 an $\RR$-valued post-$s$ \emph{winding process} $\Theta^{(s)} := (\Theta^{(s)}_t)_{t \geq s}$
which starts from $\Theta^{(s)}_s :=0$ and satisfies the SDE
\begin{align}
    \label{eq:theta-sde}
    \ud \Theta^{(s)}_t & = \frac{\ud W^{(s)}_t}{\sinh R_t} , \text{ for all } t \geq s,
\end{align}
where $W^{(s)}$ is a standard Brownian motion, independent of~$W^R$. 
Then $\theta_t = ( \theta_s + \Theta^{(s)}_t )$ modulo $2\pi$. 
Combined with the entrance law $\theta_s \sim \unif [0,2\pi)$, for every $s >0$, inevitable thanks to the 
 \emph{rapid spinning} out from the origin~\cite[\S 7.16]{ito-mckean},
 the SDEs~\eqref{eq:r-sde}--\eqref{eq:theta-sde} provide a  unique-in-law description.  In particular, the law of $B$ is rotationally invariant.
This also gives a \emph{skew product} decomposition of Brownian motion on $\HH^2$, similarly to Euclidean Brownian motion in $\RR^d$, $d \geq 2$.
Note that from~\eqref{eq:r-sde} it can be shown that $R_t >0$ for all $t >0$, a.s.~(see Lemma~\ref{lem:0-hit} below) so that in~\eqref{eq:theta-sde}, $\sinh R_t > 0$ for $t>0$. (Started instead from $B_0 \neq o$, 
there is no issue with $\theta_0$ or rapid spinning, and~\eqref{eq:theta-sde} can be used to define the post-$0$ winding process $\Theta^{(0)}$ for all time.) 
 For background on Brownian motion on $\HH^2$, including the above facts, we refer to~\cite[\S V.36]{rw2} and~\cite[\S 2]{gruet1997}, for example.
 
The following result, part of which is well known (see Remark~\ref{rem:lln}),
 shows that hyperbolic Brownian motion $B$ exhibits ballistic transience and a random limiting direction. It quantifies the intuition, apparent from~\eqref{eq:r-sde}, that since $\lim_{x \to \infty} \tanh x = 1$,
a comparison for the process $R$ at large scales is a Brownian motion with a constant drift~$1/2$. 

\begin{proposition}
\label{prop:lln}
It holds that, a.s.,
\begin{equation}
    \label{eq:R-BM-approx}
\sup_{t \in\RP} \Bigl| R_t   - W_t^R - \frac{t}{2} \Bigr| < \infty, \quad \text{ and } \quad 
\lim_{t\to\infty} \frac{R_t}{t} = \lim_{t \to \infty} \frac{\EE R_t}{t} = \frac{1}{2}. \end{equation}
  Moreover, there exists a random variable
  $\theta_\infty$, with $\theta_\infty \sim \unif [0,2\pi)$, 
  and, for every $s >0$, a random variable $\Theta_\infty^{(s)} \in \RR$, such that
    \begin{equation}
    \label{eq:angular-convergence}
     \lim_{t \to \infty} \theta_t = \theta_\infty, \as, \quad \text{ and, for every $s >0$, }\quad 
     \lim_{t \to \infty} \Theta^{(s)}_t = \Theta^{(s)}_\infty, \as
\end{equation}
Moreover, for every $s>0$, 
    \begin{equation}
    \label{eq:angular-rate}
      \limsup_{t \to \infty} t^{-1} \log \bigl| \Theta^{(s)}_\infty - \Theta^{(s)}_t \bigr|  = 
      \limsup_{t \to \infty} t^{-1} \log \bigl| \theta_\infty - \theta_t \bigr|
=  -\frac{1}{2}, \as 
\end{equation}
\end{proposition}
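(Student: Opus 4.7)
The plan is to treat \eqref{eq:R-BM-approx}--\eqref{eq:angular-rate} in turn, exploiting the skew-product structure: $R$ is autonomous (driven by $W^R$), and $\Theta^{(s)}$ is driven by the independent noise $W^{(s)}$.

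\textbf{For \eqref{eq:R-BM-approx}:} Integrating \eqref{eq:r-sde} and rewriting the drift via the identity $\frac{1}{2\tanh x} - \frac{1}{2} = \frac{1}{e^{2x}-1}$, I write $R_t = W^R_t + \frac{t}{2} + A_t$ with $A_t := \int_0^t (e^{2R_s}-1)^{-1}\,\ud s \geq 0$. Thus $R_t \geq W^R_t + t/2$, giving $R_t \to \infty$ and $\liminf R_t/t \geq 1/2$ a.s.\ by the SLLN for $W^R$. The central task is to upgrade this to $A_\infty := \lim_t A_t < \infty$ a.s.\ (which yields the uniform bound in \eqref{eq:R-BM-approx} and $R_t/t \to 1/2$ a.s.), using $(e^{2x}-1)^{-1} \leq \min\bigl(\tfrac{1}{2x}, 2e^{-2x}\bigr)$. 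The exponential decay handles $s$ large, where $R_s \geq s/4$ eventually a.s. The $\tfrac{1}{2R_s}$ bound near $s=0$ is controlled by the Bessel comparison $R_s \geq X_s$ with $X$ a two-dimensional Bessel process (coming from $1/\tanh x \geq 1/x$ in \eqref{eq:r-sde}), using $\EE \int_0^1 X_s^{-1}\,\ud s < \infty$. The same estimates give $\EE A_\infty < \infty$, so $\EE A_t / t \to 0$ and hence $\EE R_t / t \to 1/2$.

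\textbf{For \eqref{eq:angular-convergence}:} The SDE \eqref{eq:theta-sde} makes $\Theta^{(s)}$ a continuous local martingale with quadratic variation $\phi(t) - \phi(s)$, where $\phi(t) := \int_s^t \sinh^{-2} R_u\,\ud u$. From \eqref{eq:R-BM-approx}, $\sinh R_u$ grows exponentially, so $\phi(\infty) < \infty$ a.s. Since $1/\sinh R_u$ is $\sigma(R)$-measurable and $W^{(s)}$ is independent of $\sigma(R)$, Dambis-Dubins-Schwarz (on an enlarged probability space if necessary) gives $\Theta^{(s)}_t - \Theta^{(s)}_s = \beta_{\phi(t)-\phi(s)}$ for a standard Brownian motion $\beta$ that may be chosen independent of $\sigma(R)$. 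Continuity of $\beta$ on $[0, \phi(\infty) - \phi(s)]$ yields $\Theta^{(s)}_t \to \Theta^{(s)}_\infty$ a.s., and hence $\theta_t = \theta_s + \Theta^{(s)}_t \pmod{2\pi}$ converges a.s.\ to $\theta_\infty$. For uniformity: $\theta_s \sim \unif[0,2\pi)$ is independent of $\sigma(R)\vee\sigma(W^{(s)})$ (by rotational invariance and the definition of $W^{(s)}$), so the additive shift by the $\sigma(R)\vee\sigma(W^{(s)})$-measurable variable $\Theta^{(s)}_\infty$, reduced modulo $2\pi$, preserves the uniform distribution.

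\textbf{For \eqref{eq:angular-rate}:} Set $T := \phi(\infty) - \phi(s)$ and $u_t := \int_t^\infty \sinh^{-2} R_u\,\ud u$, so that for $t$ large, $|\theta_\infty - \theta_t| = |\beta_T - \beta_{T-u_t}|$. From \eqref{eq:R-BM-approx} and $\sinh^2 R_u = \tfrac{1}{4}e^{2R_u}(1 + o(1))$, a routine argument bounding $\int_t^{t+1}$ from below and $\int_{t+1}^\infty$ from above, using $W^R_u / u \to 0$ a.s., yields $t^{-1}\log u_t \to -1$ a.s. Since $\beta$ is independent of $R$, the classical law of the iterated logarithm applied to the time-reversed BM $u \mapsto \beta_T - \beta_{T-u}$ near $0$ gives, conditional on $R$ and hence unconditionally a.s.,
\begin{equation}
\limsup_{u \downarrow 0} \frac{|\beta_T - \beta_{T-u}|}{\sqrt{2u \log \log (1/u)}} = 1.
\end{equation}
Taking logarithms, dividing by $t$, and noting that the $\log \log$ correction contributes only $O(t^{-1}\log t) = o(1)$, the upper LIL bound gives $\limsup_t t^{-1}\log|\theta_\infty - \theta_t| \leq -1/2$, while the matching subsequential lower bound gives the reverse inequality, yielding \eqref{eq:angular-rate}. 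The main technical hurdle throughout is Part 1's control of $A_t$ near $s=0$ via the Bessel comparison; once that is established, Parts 2 and 3 proceed via standard time-change and LIL arguments, with the skew-product independence of $\beta$ from $R$ making the LIL at the random endpoint $T$ routine.
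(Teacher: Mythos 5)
Your proof follows the same overall architecture as the paper's (decomposition $R_t = t/2 + W_t^R + A_t$, DDS time-change for $\Theta^{(s)}$, small-time Khinchin LIL at the random endpoint of the time-change for the rate), and parts (2) and (3) are essentially the paper's argument. A genuinely nice variation is your proof of uniformity of $\theta_\infty$: instead of taking $s \to \infty$ and using $[\Theta^{(s)}]_\infty \to 0$ as the paper does, you observe directly that $\theta_s \sim \unif[0,2\pi)$ is independent of $\sigma(R)\vee\sigma(W^{(s)})$ while $\Theta^{(s)}_\infty$ is measurable with respect to that $\sigma$-algebra, so the mod-$2\pi$ shift preserves uniformity. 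That is cleaner, provided one is careful that the independence of $\theta_s$ from the \emph{whole} of $R$ (not just $R_{[0,s]}$) is available from the rapid-spinning description.

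The genuine gap is in passing from $\lim_t R_t/t = 1/2$ a.s.\ to $\lim_t \EE R_t/t = 1/2$. You write that ``the same estimates give $\EE A_\infty < \infty$'', but the large-$s$ estimate you use for the a.s.\ statement does not transfer: the naive expectation bound via $(\ee^{2R_s}-1)^{-1} \leq 2\ee^{-2R_s} \leq 2\ee^{-s-2W_s^R}$ gives $\EE[2\ee^{-s-2W_s^R}] = 2\ee^{-s}\ee^{2s} = 2\ee^s$, which \emph{diverges}; the fat Gaussian tail of $\ee^{-2W^R_s}$ swamps the drift. It is in fact true that $\EE[(\ee^{2R_s}-1)^{-1}]$ decays exponentially, but establishing this requires a genuinely finer argument — for example, splitting on $\{R_s \leq s/4\}$ vs.\ its complement and, on the rare event, combining the Bessel lower bound with a H\"older inequality (using $\EE[X_s^{-(1+\epsilon)}] \asymp s^{-(1+\epsilon)/2}$ for the 2-dimensional Bessel process, since $\EE[X_s^{-2}]=\infty$ blocks Cauchy--Schwarz) against the Gaussian left tail of $W^R_s$. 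As written, your sketch does not contain this step. The paper avoids the whole issue: it proves $\sup_{t\geq 1}\EE[(R_t/t)^2] < \infty$ via a Gr\"onwall argument on $\EE[R_t^2]$, and then deduces $\EE R_t/t \to 1/2$ from uniform integrability together with the a.s.\ limit. That route is both shorter and more robust than trying to prove $\EE A_\infty < \infty$ directly.
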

\begin{remark}
    \label{rem:lln}
The \emph{strong law} ($\lim_{t\to\infty} t^{-1} R_t = 1/2$), 
and the \emph{limiting direction} 
($\lim_{t \to \infty} \theta_t$ exists and is uniform) results
are both well known, and some or all elements
can be found in each of e.g.~\cite[\S V]{pinsky}, \cite[Theorem 2.3]{gruet1997}, \cite[\S V.36]{rw2}, and~\cite{shiozawa} for instance. However, we were unable to find either the 
quantification of the angular convergence in~\eqref{eq:angular-rate}, which says, roughly speaking, that
$\theta$ converges at a rate subsequentially no better than $\ee^{-t(1+o(1))/2}$, or the 
convergence of $\EE R_t$ in~\eqref{eq:R-BM-approx} in the literature. Since these parts of the result are particularly relevant for us (cf.~\eqref{eq:lower-bound} below), and to make the current paper more self-contained, we prove Proposition~\ref{prop:lln} in~\S\ref{sec:lln}. 
\end{remark}

\subsection{The convex hull and its perimeter length}
\label{sec:convex-hull}

The trajectory $B[0,t] := \{ B_s : s \in [0,t] \}$
of hyperbolic Brownian motion up to time $t \in \RP$
is a random compact subset of $\HH^2$ containing the origin~$o$; the subject of our work is
the \emph{closed convex hull}
$\Hh_t := \conv B[0,t]$,  for $t \in \RP$, 
the smallest closed convex subset of $\HH^2$ containing $B[0,t]$.
Recall that a set $A \subseteq \HH^2$ is convex if and only if for every pair of distinct points $x,y \in A$, the (unique) hyperbolic geodesic between $x$ and $y$ is contained in~$A$. Then $\conv S$, for $S \subseteq \HH^2$, is the intersection
of all closed convex sets $A \subseteq \HH^2$ with $S \subseteq A$.
Since $\HH^2$ is locally convex and complete, and $B[0,t]$ is compact, $\Hh_t$ is also compact. 
See Figure~\ref{fig:hyperbolic-simulation} for a simulation.

A compact, convex set $K \subset \HH^2$ can be arbitrarily well-approximated by convex (hyperbolic) polygons (i.e., convex hulls of finitely-many points), and 
its boundary $\partial K$ has a well-defined 
\emph{perimeter length} $\perim K$.
Consider the  perimeter length of the hyperbolic Brownian convex hull:
$L_t := \perim \Hh_t$,  for $t \in \RP$;
note $L_0 = 0$. 
In this paper, we are interested in $\EE L_t$, and, particularly, in the asymptotics of $\EE L_t$ as $t \to \infty$. 
For sets $A_1 \subseteq A_2$, the monotonicity property
$\perim \conv A_1 \leq 
\perim \conv A_2$ holds
(this can be checked using, e.g., the Cauchy formula of~\cite{abf} given at~\eqref{eq:PeriminD} in \S\ref{sec:cauchy} below).
The convex hull $\Hh_t$  certainly contains the geodesic line $\conv \{ o, B_t \}$ between the origin and the location of the process at time~$t$, and hence
its perimeter is at least twice the hyperbolic distance from the origin
(this also follows from the Cauchy formula, as we explain in Example~\ref{ex:line-segment}). In other words, it holds that
$L_t \geq 2 R_t$ for all $t \in \RP$. An immediate consequence of Proposition \ref{prop:lln} is thus
that the perimeter length satisfies the ``line segment'' lower bounds
    \begin{equation}
\label{eq:lower-bound}
     \liminf_{t \to \infty} \frac{L_t}{t} \geq 1 , \as, \quad \text{ and } \quad\liminf_{t \to \infty} \frac{\EE L_t}{t} \geq 1. \end{equation}

Our 
main result gives an expression for $\EE L_t$ in terms of
an \emph{exponential functional} of Brownian motion on the line, as well as large-time and small-time asymptotics for  $\EE L_t$. Define
\begin{equation}
\label{eq:xi-clean}
\Ee_t := \int_0^t \exp ( 2 W_s - s) \ud s, \text{ for } t\in \RP, \end{equation}
where $W = (W_t)_{t \in \RP}$ is a standard Brownian motion on~$\RR$. 

\begin{theorem}
    \label{thm:expectation}
    It holds that
    $\EE L_t = \sqrt{8\pi} \, \EE  \sqrt{ \Ee_t} $ for every $t \in \RP$. Moreover,
        \begin{equation}
            \label{eq:expectation-asymptotics} 
       \lim_{t \to 0} \frac{\EE L_t}{\sqrt{ 8 \pi t}} = 1, \quad \text{ and }\quad
        \lim_{t \to \infty} \frac{\EE L_t}{2t} = 1 .\end{equation}
\end{theorem}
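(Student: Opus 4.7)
The plan is to prove the identity $\EE L_t = \sqrt{8\pi}\,\EE\sqrt{\Ee_t}$ by combining the Cauchy formula for hyperbolic convex sets of~\cite{abf} (cf.~\eqref{eq:PeriminD} in~\S\ref{sec:cauchy}) with the rotational invariance of $B$ and a Dambis--Dubins--Schwarz time change, and then to extract the small- and large-time asymptotics by analysing $\EE\sqrt{\Ee_t}$ alone.

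To derive the identity, I would realise hyperbolic Brownian motion $(X_s, Y_s)$ in the upper half-plane model with $o = (0,1)$, in which $\dd X_s = Y_s\,\dd B^1_s$ and $\dd Y_s = Y_s\,\dd B^2_s$ are driven by independent real Brownian motions. Then $\log Y_s = B^2_s - s/2$, while $X_s$ is a continuous local martingale whose quadratic variation $\langle X\rangle_s = \int_0^s Y_u^2\,\dd u$ has the same distribution as $\Ee_s$. By Dambis--Dubins--Schwarz, $X_s = \beta(\langle X\rangle_s)$ for a standard Brownian motion $\beta$ independent of the time change, so conditioning on $\langle X\rangle_t$ and applying $\EE\sup_{u\le v}\beta(u) = \sqrt{2v/\pi}$ gives $\EE\sup_{s\le t} X_s = \sqrt{2/\pi}\,\EE\sqrt{\Ee_t}$. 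The Cauchy formula writes $L_t$ as a directional integral over the boundary circle at infinity; by rotational invariance of~$B$ each direction contributes equally in expectation, and in the preferred direction picked out by the half-plane model the integrand is the horizontal range $\sup_{s\le t}X_s - \inf_{s\le t}X_s$, whose expectation equals $2\EE\sup_{s\le t}X_s$ by the $X\eqd -X$ symmetry. Gathering, $\EE L_t = 2\pi\cdot\sqrt{2/\pi}\,\EE\sqrt{\Ee_t} = \sqrt{8\pi}\,\EE\sqrt{\Ee_t}$.

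For the small-$t$ limit, since $\ee^{2W_s - s}\to 1$ as $s\to 0$ we have $\Ee_t/t\to 1$ almost surely as $t\to 0$, while the uniform bound $\Ee_t \le t\exp(2\sup_{s\le 1}|W_s|)$ for $t\in(0,1)$ furnishes an integrable dominating function for $\sqrt{\Ee_t/t}$; dominated convergence delivers $\EE\sqrt{\Ee_t}\sim\sqrt{t}$, hence the first limit in~\eqref{eq:expectation-asymptotics}.

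The large-$t$ asymptotic is the main obstacle. Corollary~\ref{cor:liminf} combined with the identity yields only $\liminf_{t\to\infty}\EE\sqrt{\Ee_t}/t \ge 1/\sqrt{8\pi}$, a factor of two smaller than the target $1/\sqrt{2\pi}$; moreover $\Ee_t$ converges almost surely to a finite limit $\Ee_\infty$ with law $1/(2\gamma_{1/2})$ (Dufresne's identity) satisfying $\EE\sqrt{\Ee_\infty}=+\infty$, so the claimed linear growth of $\EE\sqrt{\Ee_t}$ must come from progressive capture of the heavy right tail of $\sqrt{\Ee_\infty}$ as $t\to\infty$. To pin down the constant $1/\sqrt{2\pi}$, I would either (i) exploit the exact-in-law splitting $\Ee_t\eqd\Ee_a + \ee^{2W_a - a}\Ee'_{t-a}$ with $\Ee'$ an independent copy, iterate, and use the explicit distribution of $\Ee_\infty$ to quantify the tail contribution; or (ii) combine the representation $\sqrt{x} = (2\sqrt{\pi})^{-1}\int_0^\infty \lambda^{-3/2}(1 - \ee^{-\lambda x})\,\dd\lambda$ with Yor's explicit Laplace-type formulas for $\Ee_t$ and perform a large-$t$ asymptotic analysis.
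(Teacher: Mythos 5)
Your overall strategy for the identity mirrors the paper's (Cauchy formula plus rotational invariance plus Dambis--Dubins--Schwarz), but the crucial step has a genuine gap. You assert that ``in the preferred direction picked out by the half-plane model the integrand is the horizontal range $\sup X_s - \inf X_s$'', but the hyperbolic Cauchy formula~\eqref{eq:cauchy} from~\cite{abf} integrates $\sup_{x\in K}\lambda(\varphi,x)$ over $\varphi\in[0,2\pi)$, and $\lambda(\varphi+\pi,\cdot)$ is \emph{not} $-\lambda(\varphi,\cdot)$ (the denominators in~\eqref{eq:lambda} flip sign differently), so the integrand cannot be recast as a width. Moreover your arithmetic is internally inconsistent: if the per-direction integrand were really the range, with $\EE[\mathrm{range}]=2\EE\sup X_s$, then $\int_0^{2\pi}$ would give $4\pi\EE\sup X_s$, twice the $2\pi\EE\sup X_s$ you end up writing. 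What is actually true, and what the paper establishes by an explicit computation chasing through the Poincar\'e-disk to Beltrami--Klein change of coordinates~\eqref{eq:DtoH}--\eqref{eq:U-V-X}, is that $\lambda(\pi/2, f(Z_s)) = -X_s$, so $\sup_{s\le t}\lambda(\pi/2,\cdot) = -\inf_{s\le t}X_s \eqd X_t^\star$, whence $\EE L_t = 2\pi\EE[X_t^\star]$. That identification is the technical crux of the proof and is asserted rather than derived in your proposal; it requires checking that the $\lambda$-coordinate at angle $\pi/2$ equals precisely (minus) the half-plane horizontal coordinate, which is not obvious.

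The small-$t$ dominated-convergence argument is fine and parallel to the paper's. The large-$t$ asymptotic, however, remains unproven in your proposal: you correctly identify it as the main obstacle, note that the line-segment bound gives only $\liminf\EE\sqrt{\Ee_t}/t \ge (8\pi)^{-1/2}$, and that $\Ee_\infty$ has infinite half-moment, but you only sketch two candidate routes (a recursive splitting of $\Ee_t$, or a Laplace representation combined with Yor's formulae) without carrying either out. The paper handles this by invoking Hariya and Yor's result (here Lemma~\ref{lem:hy-moments}\eqref{lem:hy-moments-ii}) that $\EE\sqrt{\Ee_t}\sim t/\sqrt{2\pi}$. Either you need to cite that result or carry through one of your proposed arguments to completion; as written, the second limit in~\eqref{eq:expectation-asymptotics} is not established.
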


Theorem~\ref{thm:expectation} is a direct consequence of Theorem~\ref{thm:expectation-x-representation} combined with Proposition~\ref{prop:exp-max-X} below.

\begin{remark}
    \label{rem:expectation}
The large-time asymptotics in~\eqref{eq:expectation-asymptotics} of 
Theorem~\ref{thm:expectation} show that the ``line segment'' lower bound from~\eqref{eq:lower-bound} is not sharp, by a factor of~$2$, at least in the expectation sense. 
On the other hand, the line segment bound is of the correct order, 
which indicates the delicate balance between the exponential convergence of $\theta_t$
given in~\eqref{eq:angular-rate} and the exponential growth of hyperbolic arc-length with increasing radius. 
The small-time asymptotics
in~\eqref{eq:expectation-asymptotics} coincide with those in the Euclidean setting~\cite{letac-takacs} in accord with the intuition that Brownian motion experiences hyperbolic space as locally flat: see~\eqref{eq:euclidean-formula} and the subsequent comments below.
\end{remark}

\begin{figure}
    \centering
    \begin{tikzpicture} 
    \node at (0,8.5) {\includegraphics[width=0.47\textwidth]{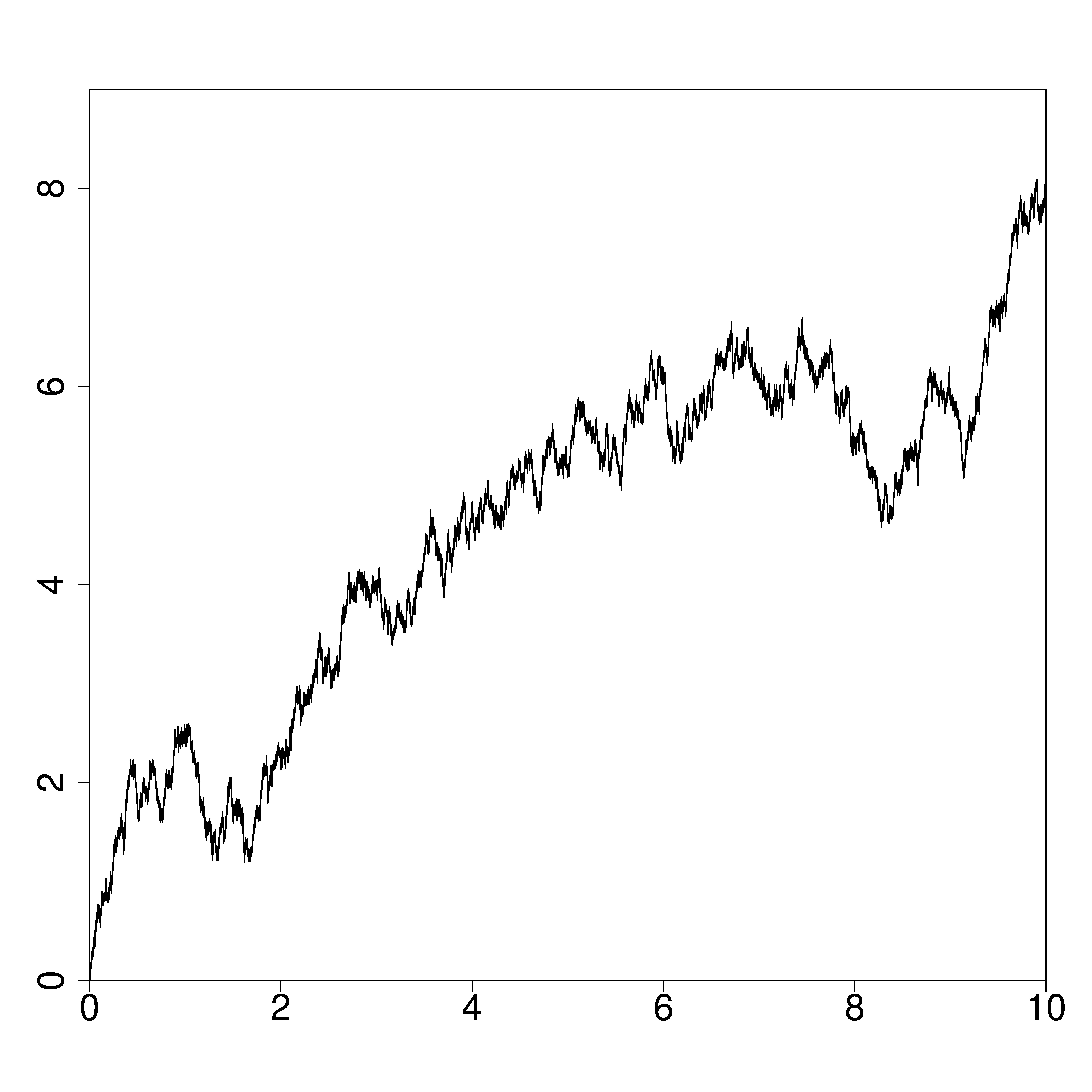}}; 
    \node at (8.5,8.5)  {\includegraphics[width=0.47\textwidth]{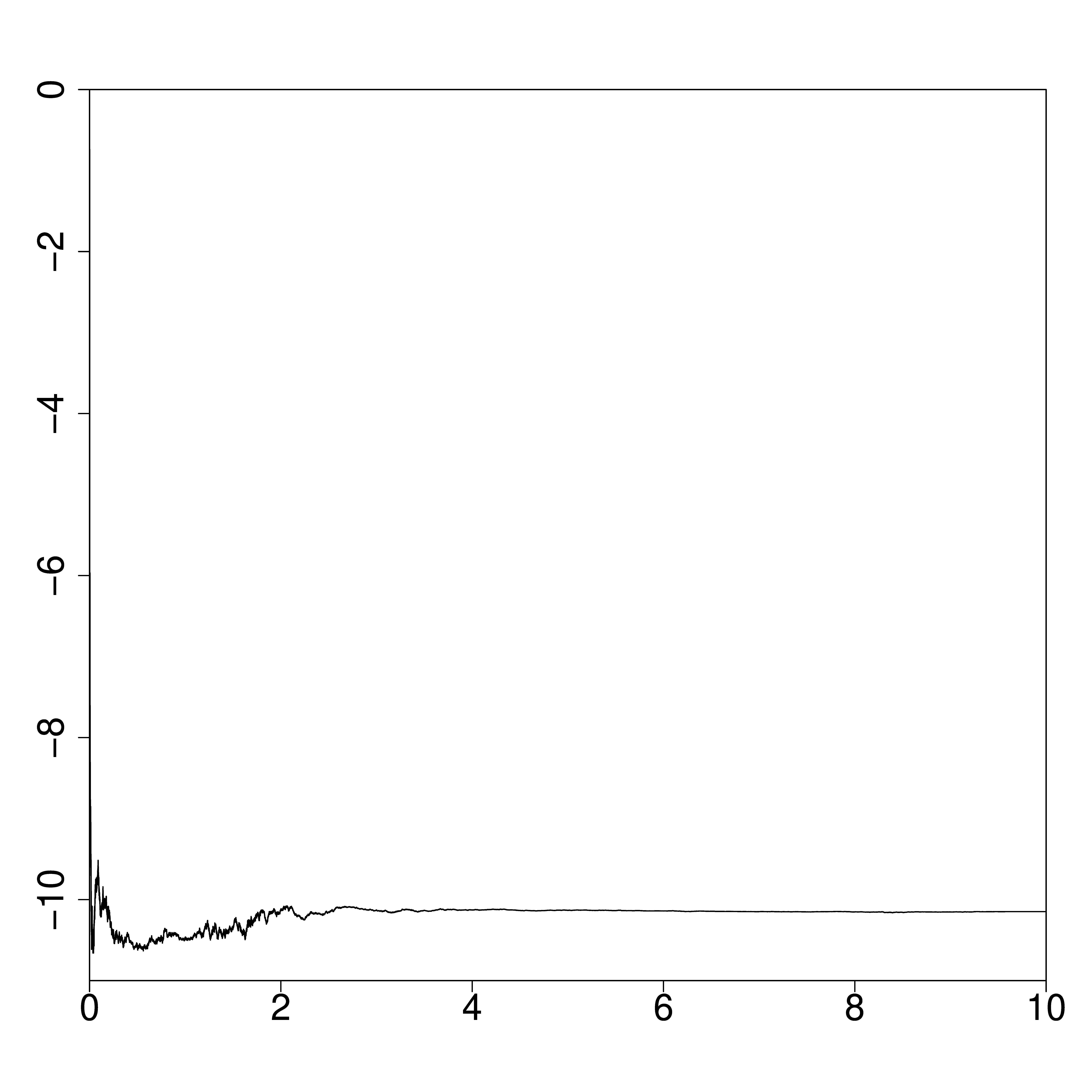}}; 
        \node at (0,0) {\includegraphics[width=0.47\textwidth]{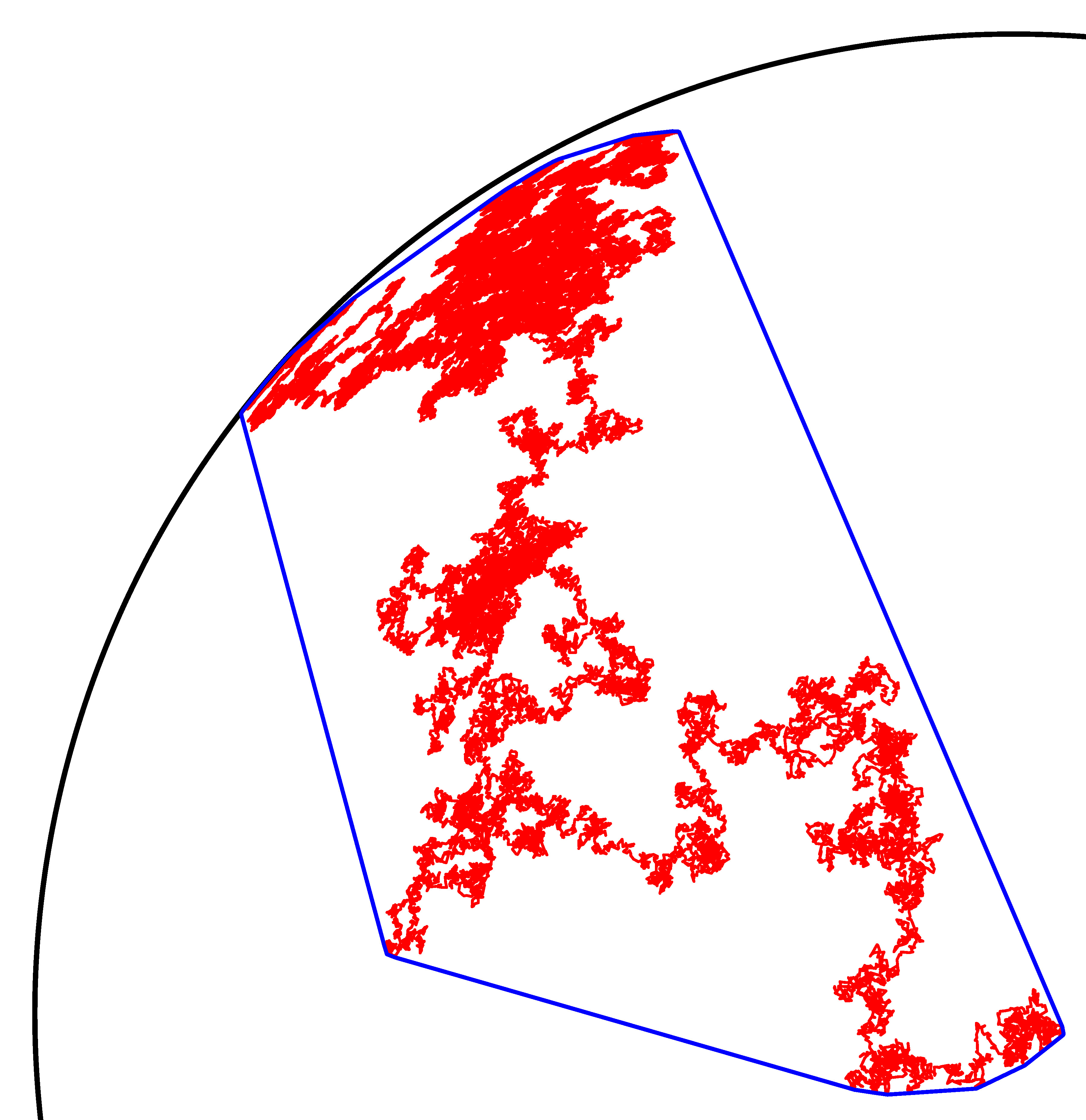}}; 
    \node at (8.5,0)  {\includegraphics[width=0.47\textwidth]{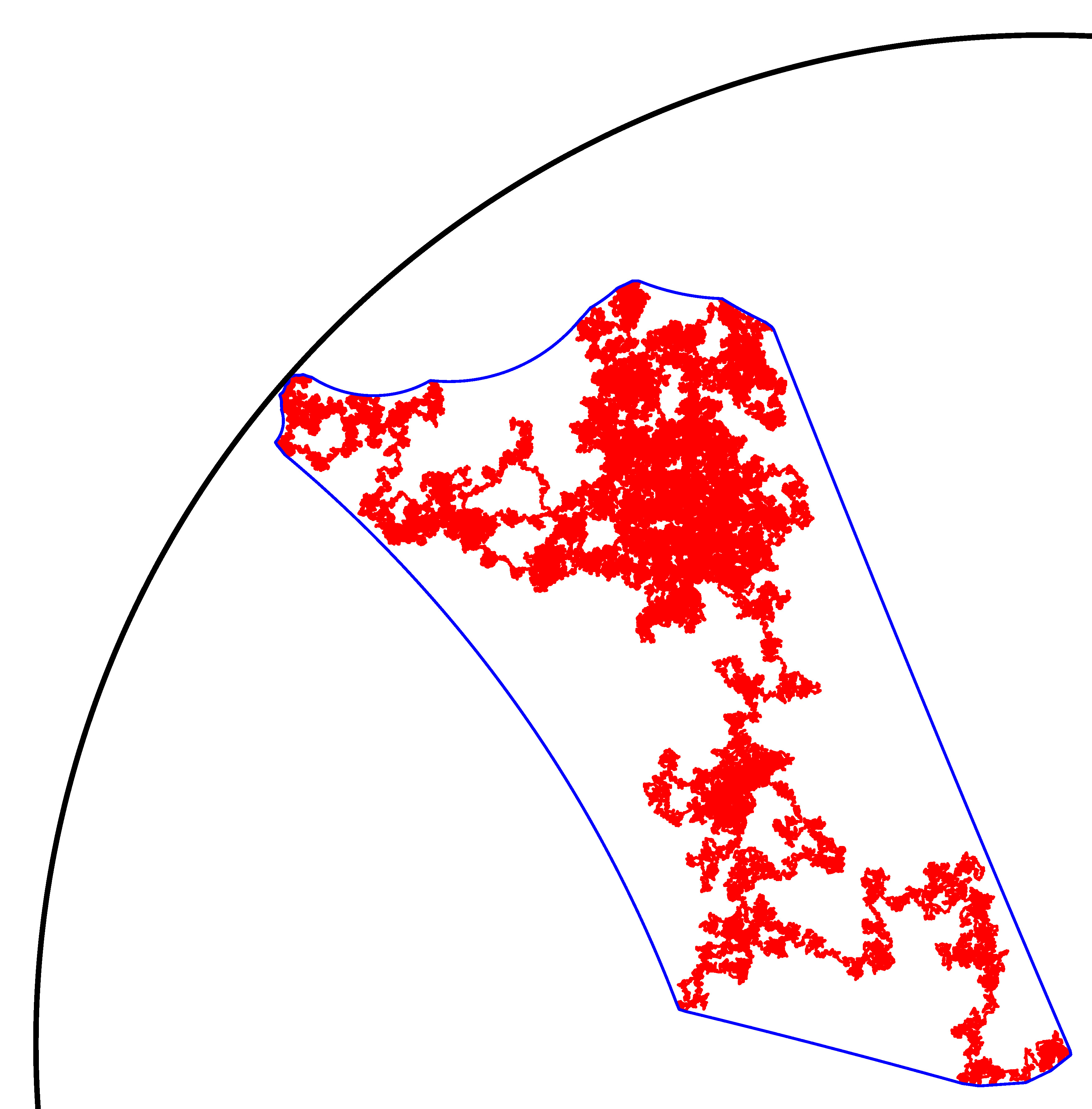}}; 
    \node at (3.5,2) {$\DDK$};
     \node at (-2,-2) {$\Hh_{10}$};
      \node at (12,2) {$\DDP$};
            \node at (3.8,-3.67) {$o$};
                  \node at (12.3,-3.85) {$o$};
     \node at (8.2,-2) {$\Hh_{10}$};
      \node at (3,4.8) {$t$};
      \node at (11.5,4.8) {$t$};
       \node at (-3.9,8.8) {$R_t$};
       \node at (4.6,8.8) {$\Theta^{(s)}_t$};
        \end{tikzpicture} 
    \caption{Simulated hyperbolic Brownian motion trajectory and its convex hull. The SDEs~\eqref{eq:r-sde} and~\eqref{eq:theta-sde} were each approximately solved, in turn, over time interval $[0,10]$ with an Euler scheme with $10^6$ steps: \emph{top left pane} shows the $R$ process  and \emph{top right pane} shows the $\Theta^{(s)}$ process over time $[s,10]$ for $s = 10^{-3}$. The discrete approximation moving swiftly to a large negative value reflects the rapid spinning out from the origin. The \emph{bottom left pane} shows the resulting Brownian trajectory $B_t$ (shown in red) over time $t \in [0,10]$ in a section of the Beltrami--Klein disk~$\DDK$;
    in this model geodesics are straight lines, so the convex hull $\Hh_{10}$ (boundary in blue) can be computed more easily. The \emph{bottom right pane} shows that same trajectory (red) and convex hull (blue) represented in the Poincar\'e disk~$\DDP$. See~\S\ref{sec:models} for a summary of the different models of $\HH^2$ and how one maps between them. Note that already by time $10$ the process appears very close to the boundary, so running longer simulations would yield little more visual information.}
    \label{fig:hyperbolic-simulation}
\end{figure}

There has been a lot of work in the last couple of decades on 
 exponential functionals such as $\Ee_t$, motivated in part
by actuarial or financial applications~\cite{d90,dsg}: 
see~\cite{my05} for a survey, 
see e.g.~\cite{yor92,ag} for their links to hyperbolic Brownian motion, and
e.g.~\cite{yor92,yor01,hy2004} for further properties. The 
asymptotics available for
moments of $\Ee_t$~\cite{hy2004} are the source of~\eqref{eq:expectation-asymptotics}.
On the other hand, the expectation identity we establish in Theorem~\ref{thm:expectation} combined with results
of~\cite{yor92} yields 
for $\EE L_t$ at each fixed $t \in \RP$
an exact, but complicated, multiple-integral expression~\eqref{eq:exact-formula-yor}, and it is not straightforward to even obtain a numerical estimate for~$\EE L_1$ (see~Appendix~\ref{sec:exact} below).
However, a remarkable identity due to Yor~\cite{yor01} leads to an exact expression at a random \emph{exponential} time;
let $\expo{\lambda}$ denote the exponential distribution with parameter $\lambda >0$ (hence mean $1/\lambda$).

\begin{corollary}
\label{cor:yor}
For $\lambda >0$, let $T_\lambda \sim \expo{\lambda}$ be  independent of $B$. Then
    \begin{equation}
    \label{eq:cor-yor}
    \EE L_{T_\lambda} = G \bigl( \sqrt{8 \lambda +1} \bigr), 
    \text{ where }
  G (x ) :=  \pi \left( \frac{x-1}{x+1} \right) \left( \frac{\Gamma \bigl( \tfrac{x-1}{4} \bigr)}{ \Gamma \bigl( \tfrac{x+1}{4} \bigr)} \right)^2
    , \end{equation}
    where $\Gamma$ denotes the (Euler) gamma function. 
   In particular, we have $\lim_{\lambda \to \infty} \sqrt{\lambda} \EE L_{T_\lambda} = \pi \sqrt{2}$, 
   $\lim_{\lambda \to 0} \lambda \EE L_{T_\lambda} = 2$, 
    and, 
   when $\lambda =1$, $\EE L_{T_1} = G(3) = \pi^2/2    \approx 4.93\ldots$.
\end{corollary}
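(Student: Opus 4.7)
The plan is to apply Theorem~\ref{thm:expectation} at the random time $T_\lambda$ and invoke Fubini--Tonelli to reduce the computation to the exponential functional of Brownian motion at an exponential time:
\begin{equation*}
    \EE L_{T_\lambda} = \int_0^\infty \lambda \ee^{-\lambda t} \EE L_t \ud t = \sqrt{8\pi} \int_0^\infty \lambda \ee^{-\lambda t} \EE \sqrt{\Ee_t}\ud t = \sqrt{8\pi}\, \EE \sqrt{\Ee_{T_\lambda}} .
\end{equation*}
Hence the task is to evaluate the half-moment $\EE \sqrt{\Ee_{T_\lambda}}$, where $\Ee_{T_\lambda} = \int_0^{T_\lambda} \exp(2 W_s - s)\ud s$ is the exponential functional of a Brownian motion with drift $-1/2$ sampled at an independent exponential time.

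The central input is the ``remarkable identity'' of Yor~\cite{yor01} highlighted immediately before the corollary, which provides an explicit description of the law of $\Ee_{T_\lambda}$ as a simple combination of independent beta and/or gamma distributed variables, with shape parameters that depend on $\lambda$ only through $x = \sqrt{8\lambda + 1}$. The natural parameters $(x \pm 1)/4$ emerge as the positive roots of the indicial equation $y(y + 1/2) = \lambda/2$ arising from the Lamperti time change of the exponential of drifted Brownian motion into a Bessel-type process. Substituting this representation into $\EE\sqrt{\Ee_{T_\lambda}}$ reduces the half-moment to standard beta- and gamma-integrals, producing a ratio of four gamma functions; consolidating these via the Legendre duplication formula $\Gamma(z) \Gamma(z + 1/2) = 2^{1 - 2z}\sqrt{\pi}\, \Gamma(2z)$ together with $\Gamma(z+1) = z \Gamma(z)$ yields the claimed closed form $G(\sqrt{8\lambda + 1})$ with gamma arguments $(x \pm 1)/4$ and rational prefactor $(x-1)/(x+1)$.

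The three concluding statements follow from standard asymptotics of the gamma function applied to $G$. As $\lambda \to \infty$, Stirling gives $\Gamma((x-1)/4)/\Gamma((x+1)/4) \sim 2 x^{-1/2}$, so $G(x) \sim 4\pi/x$ and hence $\sqrt{\lambda}\, G(\sqrt{8\lambda + 1}) \to 4\pi/\sqrt{8} = \pi \sqrt{2}$. As $\lambda \to 0$, the simple pole $\Gamma(z) \sim 1/z$ near $z = 0$ gives $\Gamma((x-1)/4) \sim 4/(x - 1)$ while $\Gamma((x+1)/4) \to \Gamma(1/2) = \sqrt{\pi}$, so $G(x) \sim 8/(x-1)$; combined with $x - 1 \sim 4\lambda$ this yields $\lambda\, G(\sqrt{8\lambda + 1}) \to 2$. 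At $\lambda = 1$ one has $x = 3$ and $G(3) = \pi \cdot \tfrac{1}{2} \cdot (\Gamma(1/2)/\Gamma(1))^2 = \pi^2/2$.

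The main obstacle is pinpointing the precise form of Yor's identity that most directly connects $\Ee_{T_\lambda}$ to the specific gamma ratio appearing in $G$, and then orchestrating the gamma-function algebra (in particular the duplication step) to land on the compact form in~\eqref{eq:cor-yor}. A potentially smoother alternative would be to work directly with the representation of $\sqrt{\Ee_t}$ supplied by Proposition~\ref{prop:exp-max-X}, since exponential randomization of suprema of Brownian-type processes often produces especially tractable distributions.
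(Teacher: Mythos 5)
Your proposal follows essentially the same route as the paper: apply the identity $\EE L_t = \sqrt{8\pi}\,\EE\sqrt{\Ee_t}$ of Theorem~\ref{thm:expectation} at the independent exponential time (Fubini), evaluate $\EE\sqrt{\Ee_{T_\lambda}}$ via Yor's exponential-time identity, and finish with gamma-function algebra and standard asymptotics, all of which you carry out correctly. The one input you flag as "the main obstacle" is precisely the identity the paper quotes from Yor: $\Ee_{T_\lambda} \eqd \beta_{1,a}/(2\gamma_b)$ with $\beta_{1,a}\sim\mathrm{Beta}(1,a)$ and $\gamma_b\sim\mathrm{Gamma}(b)$ independent, $4a=x-1$, $4b=x+1$, $x=\sqrt{8\lambda+1}$; then $\EE\sqrt{\Ee_{T_\lambda}} = 2^{-1/2}\,\EE\sqrt{\beta_{1,a}}\;\EE\gamma_b^{-1/2}$ with $\EE\sqrt{\beta_{1,a}} = \sqrt{\pi}\,\Gamma(a+1)/\bigl(2\Gamma(a+\tfrac{3}{2})\bigr)$ and $\EE\gamma_b^{-1/2} = \Gamma(b-\tfrac{1}{2})/\Gamma(b)$, yielding the four-gamma ratio. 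One small correction to your algebra plan: the Legendre duplication formula plays no role — the reduction of $\pi\,\Gamma(\tfrac{x+3}{4})\Gamma(\tfrac{x-1}{4})/\bigl(\Gamma(\tfrac{x+5}{4})\Gamma(\tfrac{x+1}{4})\bigr)$ to $G(x)$ uses only the recursion $\Gamma(z+1)=z\Gamma(z)$ applied to $\Gamma(\tfrac{x+3}{4})$ and $\Gamma(\tfrac{x+5}{4})$.
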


In the case of Euclidean Brownian motion on~$\RR^2$, its perimeter
length $L^{\mathrm{E}}_t$ enjoys the exact formula, due to Letac
and Tak\'acs~\cite{letac-takacs},
\begin{equation}
    \label{eq:euclidean-formula}
    \EE L^{\mathrm{E}}_t = \sqrt{ 8 \pi t} , \text{ for all } t \in \RP,
\end{equation}
from which it readily follows that if $T_\lambda \sim \expo{\lambda}$ is independent of the Brownian motion, then 
\[ \EE L^{\mathrm{E}}_{T_\lambda} = \sqrt{8\pi} \, \EE \sqrt{T_\lambda} = \pi \sqrt{ 2 / \lambda}, \text{ for } \lambda > 0;\]
in particular, when $\lambda =1$, $\EE  L^{\mathrm{E}}_{T_1} = 
\pi \sqrt{2} \approx 4.44\ldots$.
The appearance of~$\sqrt{8\pi}$ in both Theorem~\ref{thm:expectation} and in~\eqref{eq:euclidean-formula}
is not merely a coincidence, but a reflection of the local-Euclidean heuristic at small times. This is in line with the fact that the heat kernel $p_t(x,y)$ of $\HH^2$ has the small-time behaviour (which holds in general, see e.g. \cite[Chapter 5]{hsu}), recalling that $d_\HH$ denotes the hyperbolic distance, 
$$
\lim_{t\to0} -2t\log p_t(x,y) = d_\HH(x,y)^2,
$$
which resembles its Euclidean counterpart especially when $x$ and $y$ are close;
also taking $t \approx 0$ in the SDE description yields the same intuition, since, as $r\to 0$, $\tanh r \sim r$ and $\sinh r \sim r$, so~\eqref{eq:r-sde}--\eqref{eq:theta-sde} approximate the polar description of Brownian motion on $\RR^2$ for $t \approx 0$. 
 See~\S\ref{sec:comments} below for further comparison between the hyperbolic and Euclidean settings.

Theorem~\ref{thm:expectation} is proved in~\S\S\ref{sec:hypBM}--\ref{sec:perimeter}.
To prepare for the proofs, in \S\ref{sec:hyperbolic} we recall some appropriate hyperbolic geometry, introduce notation associated with models of $\HH^2$ that we will refer to later, and present a Cauchy formula from~\cite{abf} that is a key tool. Then \S\ref{sec:hypBM} turns to hyperbolic Brownian motion on the Poincar\'e half-plane model, including its SDE description (\S\ref{sec:SDE}), its asymptotics (\S\ref{sec:half-plane-limit}) and the relation to exponential functionals (\S\ref{sec:exponential}). As mentioned above, the proof of Proposition~\ref{prop:lln} is given in \S\ref{sec:lln}; this section is essentially self-contained, using only the SDE~\eqref{eq:r-sde}, and can be read independently of~\S\S\ref{sec:hyperbolic}--\ref{sec:perimeter}. Finally, Appendix~\ref{sec:exact} provides an exact formula for $\EE L_t$ as mentioned below Theorem~\ref{thm:expectation},
and gives the proof of Corollary~\ref{cor:yor}.

To conclude this introduction, in the next subsection (\S\ref{sec:comments}) we discuss some consequences of our main results and some open questions that they raise, and draw some contrasts between the behaviour of the hyperbolic Brownian convex hull and the case of Brownian motion in~$\RR^d$, $d \in \NN$ (hereafter referred to as the ``Euclidean case''). 
Convex hulls of Euclidean random walks, Brownian motions, and L\'evy processes (etc.) have been an active topic
of study going back to L\'evy~\cite{levy}; see~\cite{majumdar} for a survey, and also~\cite{ss,sw,chp,wx1,wx2,mw,eldan,ill,av2021,vz2018,bang} for a selection of classical and more recent work.
It is natural to investigate related phenomena for hyperbolic Brownian motion 
in view of recent interest in ``hyperbolic stochastic geometry'', e.g.~\cite{bfm,gkt,bht,hht,ot,oy}.
Other aspects of the large-scale geometry of hyperbolic Brownian motion have been studied in~\cite{gruet2001,gruet1997}.

\subsection{Discussion and open questions}
\label{sec:comments}

One important contrast between the behaviour of hyperbolic and Euclidean Brownian motions is the limiting-direction result~\eqref{eq:angular-convergence}. Clearly, $\Hh_s \subseteq \Hh_t$ for $0 \leq s \leq t$; one consequence of~\eqref{eq:angular-convergence} is that  the monotone limit set $\Hh_\infty := \cup_{t \in \RP} \Hh_t$ does \emph{not} occupy the whole space, i.e.~$\PP ( \Hh_\infty = \HH^2 ) =0$. This is in contrast to the (standard, driftless) Euclidean case, where the corresponding limit set is the whole of~$\RR^d$; note that this is a reflection of how the process explores the sphere of directions, rather than of compact-set recurrence/transience.

Euclidean Brownian motion with \emph{non-zero drift} also has the limiting direction property, and for the same reason its limit convex hull is not the full space. Indeed,  the Euclidean analogue of the strong-law behaviour in~\eqref{eq:R-BM-approx} means that the convex hull of Euclidean Brownian motion with drift resembles a line segment for large~$t$, in a sense that is strong enough 
to deduce, for example, strong laws for the perimeter length; see~\cite{ss,MW18,lmw} for laws of large numbers for the perimeter of the convex hull of Euclidean random walks with drift, and~\cite{wx1,MW18} for fluctuations. 

Despite the fact that (by Proposition~\ref{prop:lln}) hyperbolic Brownian motion also satisfies a strong law and has a limiting direction, one contribution of our main result is to show that this line-segment comparison breaks down in the hyperbolic case. 
Our  Theorem~\ref{thm:expectation} shows that for $t$ large we have $\EE L_t \approx 4\EE R_t$. This is twice as much as what one would obtain if the convex hull of hyperbolic Brownian motion was indeed getting close to a hyperbolic line segment,
corresponding to the fact that the constant in Theorem~\ref{thm:expectation}
is twice the lower bound provided by~\eqref{eq:lower-bound}.
Natural questions remain; the next would settle whether or not a law of large numbers holds for $L_t$.

\begin{problem}
Is there a distributional limit for $L_t/t$? If so, is it constant?
\end{problem}

Partial progress on the above might be to first consider:

\begin{problem}
Obtain asymptotics, or good bounds in either direction, for $\Var L_t$.
\end{problem}

To address these questions, finer study of the fluctuations of the angle process~$\theta_t$,
as partially quantified by~\eqref{eq:angular-rate} above, seems necessary (cf.~Remark~\ref{rem:expectation}).  

To give some further insight in the comparison with the Euclidean setting, we consider the scaled trajectory $u \mapsto \left(t^{-1}R_{ut},\theta_{ut}\right)$ for $u \in [0,1]$. By the triangle inequality we have
$$
d_\HH\left((t^{-1}R_{ut},\theta_{ut}),(u,\theta_\infty)\right) \leq |t^{-1}R_{ut} - u| + \sinh(u)|\theta_{ut} - \theta_\infty|.
$$
From Proposition~\ref{prop:lln} it follows that $\lim_{t \to \infty} \sup_{0\leq u\leq 1} | t^{-1}R_{ut} - u | = 0$, a.s.  Furthermore,  for $t >0$,
\begin{align*}
\sup_{0\leq u\leq 1} \sinh(u)|\theta_{ut} - \theta_\infty| 
&=
\max\left\{\sup_{0\leq u\leq t^{-1/2} } \sinh(u)|\theta_{ut} - \theta_\infty|,\sup_{t^{-1/2}\leq u\leq 1} \sinh(u)|\theta_{ut} - \theta_\infty|\right\} 
\\
&\leq
\max\left\{2\pi\sinh (t^{-1/2} ),\sinh(1)\sup_{t^{-1/2} \leq u\leq 1} |\theta_{ut} - \theta_\infty| \right\}.
\end{align*}
 Proposition~\ref{prop:lln} shows that $\theta_t \to \theta_\infty$, a.s., implying that the second term inside the maximum goes to 0. Since   the first term also goes to 0 as $t \to \infty$, we can collect everything to find that
$$
\lim_{t\to\infty} \sup_{0 \leq u \leq 1} d_\HH\left((t^{-1}R_{ut},\theta_{ut}),(u,\theta_\infty)\right) = 0.
$$
This implies that as sets,
$$
\{(t^{-1}R_{ut},\theta_{ut}) \colon 0 \leq u \leq 1\} \to \{(u,\theta_\infty) \colon 0 \leq u \leq 1\}
$$
in the Hausdorff distance. Following reasoning based on continuous mapping (cf.~\cite{lmw}), this implies 
$$
\lim_{t \to \infty} \perim\conv\{(t^{-1}R_{ut},\theta_{ut}) \colon 0 \leq u \leq 1\} = 2 \as
$$
In the Euclidean setting, from here one can derive the behaviour of the perimeter of the convex hull of $\{(R_{ut},\theta_{ut}) \colon 0 \leq u \leq 1\}$, since it is simply $t$ times larger. However, since distance in hyperbolic space grows differently, in the hyperbolic setting no such implication is valid. 

Finally, we raise:

\begin{problem}
Consider Brownian motion in higher-dimensional hyperbolic space $\HH^d$, $d  \geq 3$. Exact formulae for expected volumes and surfaces areas, in the vein of the Euclidean case~\cite{eldan}, seem out of reach, but asymptotics in the vein of our Theorem~\ref{thm:expectation} would be of interest.
For $d >2$, an appropriate analogue of Proposition~\ref{prop:lln} holds (the limiting speed is, in general, $\frac{d-1}{2}$ instead of $1/2$~\cite[p.~41]{gruet1997}) but the planar Cauchy formula from \S\ref{sec:cauchy} may be replaced
by a more involved Crofton-type formula (see e.g.~\cite[p.~907]{hht}). 
\end{problem}

\section{Background on hyperbolic geometry}
\label{sec:hyperbolic}

\subsection{Models of the hyperbolic plane}
\label{sec:models}

There are several commonly used models for the hyperbolic plane $\HH^2$,
which represent the metric space $\HH^2$ via some (subset of) Euclidean space endowed with a specific metric. Such a model is needed for performing actual computations, and the different models have different features that make certain computations more convenient in one model or the other, so it is sometimes helpful to switch among several models. The following paragraphs give a compact overview 
of what we will use in this paper; we refer to~\cite{CFKP97}  for a more detailed
account of the geometry, and acknowledge~\cite[\S V.36]{rw2} and~\cite{lo}
as containing lucid presentations with similar intent as the present section.

\paragraph{The Poincar\'e half plane model.}
The  Poincar\'e half-plane $\HHP$ takes as its base space the Euclidean upper half-plane $\RR \times (0,\infty)$. We write $(x,y) \in \HHP$
for the (Cartesian) \emph{half-plane coordinates}. Here $(0,1)$ represents the origin
of $\HH^2$.
The geodesic polar coordinates $(R,\theta) \in \RP \times [0,2\pi)$ for a point in $\HH^2$ are then related to the
Cartesian coordinates $(x,y) \in \HHP$ via the transform
\begin{equation}
    \label{eq:geodesic-polar-xy}
 x = \frac{\sinh R \cos \theta}{\cosh R - \sinh R \sin \theta} ,~~~
y = \frac{1}{\cosh R - \sinh R \sin \theta} .
\end{equation}
In particular, the hyperbolic radius $R$ of $(x,y) \in \HHP$ satisfies
\[ \cosh R = \frac{x^2 + y^2 + 1}{2y} = 1 + \frac{x^2 + (y-1)^2}{2y}. \]

\paragraph{The Poincar\'e disk model.}
The Poincar\'e disk $\DDP$ takes as its base space the open unit disk $\DD := \{ (u,v) \in \RR^2 : u^2 + v^2 < 1\}$, with a certain metric.
A point $(x,y) \in \HHP$ corresponds to the point $(u,v) \in \DDP$ via the canonical transform (mapping $(0,1)$ to $(0,0)$ and $(0,0)$ to $(0,-1)$)
\[ u = \frac{2x}{x^2 + (y+1)^2}, ~~~ v = \frac{x^2 + y^2 -1}{x^2 + (y+1)^2} .\]
Equivalently, a point $(u,v) \in \DDP$ corresponds to $(x,y) \in \HHP$ via 
\begin{equation}
    \label{eq:xy-uv} x = \frac{2u}{u^2 + (1-v)^2}, ~~~ y = \frac{1-(u^2 + v^2)}{u^2 + (1-v)^2} = \frac{2 (1-v)}{u^2 + (1-v)^2} - 1 .\end{equation}
It is also  useful to represent $(u,v) \in \DDP$ via \emph{polar disk coordinates} $(r,\theta) \in [0,1) \times [0,2\pi)$, given by
$r^2 = u^2 + v^2$, $u = r \cos \theta$, and~$v = r \sin \theta$.
Comparison of~\eqref{eq:geodesic-polar-xy} and~\eqref{eq:xy-uv} shows that  
\[ \frac{x}{y} = \sinh R \cos \theta = \frac{2 r \cos \theta}{1-r^2} , \text{ for } y >0,\]
noting that the disk polar angle coincides with the geodesic polar angle,
and hence the geodesic radius and the disk radius are related by
\[ \sinh R = \frac{2r}{1-r^2}, ~~~ \cosh R = \frac{1+r^2}{1-r^2} , \]
or, equivalently,
\begin{equation}
\label{eq:R-r-log}
R = \log \frac{1+r}{1-r} = 2 \arctanh r, ~~~ r = \frac{\ee^{R} -1}{\ee^{R} +1} = \tanh (R/2) . \end{equation}

\paragraph{The Beltrami--Klein disk.}  Like the 
Poincar\'e disk model, $\DDP$, the
Beltrami--Klein disk $\DDK$ model takes the  open unit disk $\DD$ as its base space, but uses a different Riemannian metric. Both $\DDP$ and $\DDK$ are obtained by pushing forward the metric on the hemisphere model for the hyperbolic plane through two different projections: orthogonal projection gives the Beltrami--Klein model, while stereographic projection gives the Poincar\'e disk model. The benefit of the Beltrami--Klein model is that geodesics are straight lines, making them easy to compute. However, the Riemannian metric (and hence the Laplacian) have much simpler formulas in the Poincar\'e disk model.

 To go between the two models, one simply has to rescale the points in an appropriate way. In particular, the functions $f,g:\DD \to \DD$ given by
\begin{equation}\label{e:pdtobk}
    f(z) = \frac{2z}{1 + |z|^2}, \quad\text{ and }\quad
g(z) = \frac{z}{1 + \sqrt{1 - |z|^2}}
\end{equation}
are mutually inverse, and, $f: \DDP \to \DDK$ (and equivalently $g: \DDK \to \DDP$) is an isometry (see \cite{CFKP97}, where one can obtain these function as composition of isometries via the hemisphere model). In particular,   if $B = (B_t)_{t \in \RP}$ is a hyperbolic Brownian motion represented in the Poincar\'e disk model, then  $\widetilde{ B } = ( \widetilde{ B }_t )_{t \in \RP}$ given by
$$
\widetilde{ B }_t = f(B_t) = \frac{2B_t}{1 + |B_t|^2}
$$
is a Brownian motion in the Beltrami--Klein model.

\subsection{Hyperbolic Cauchy formula}
\label{sec:cauchy}

In Euclidean integral geometry, the \emph{Cauchy formula} expresses
the perimeter of a convex body via an integral over widths of all of its one-dimensional projections. 
In the hyperbolic space $\HH^2$,  an analogous Cauchy formula for the  Beltrami--Klein disk model $\DDK$ was given by~\cite{abf}. Again, the utility of the formula is that it reduces a two-dimensional problem to a family of one-dimensional problems.

To describe the formula, consider a hyperbolic convex body $K \subset \DDK$.
Fix an angle $\varphi \in [0,2\pi)$ at the origin $(0,0)$ (with respect to the horizontal axis) in $\DDK$. 
Take point 
$R(\varphi) := (\cos \varphi, \sin \varphi)$ on the boundary of $\DDK$, 
and its anticlockwise orthogonal companion $R^\perp (\varphi) := (- \sin\varphi, \cos \varphi)$. Let $\ell (x, \varphi)$ denote the line through $R(\varphi)$ and $x \in K$ and let $\ell^\perp (\varphi)$
denote the line through $(0,0)$ and $R^\perp (\varphi)$,
parametrized by signed Euclidean arc length as $\ell^\perp (\varphi) := \{ \ell^\perp (\varphi, \lambda) : \lambda \in \RR \}$. For given $x \in K$ and $\varphi \in [0,2\pi]$,
let $\lambda (\varphi,x)$ denote the value of
$\lambda \in \RR$
such that
$\ell^\perp (\varphi, \lambda) \in \ell (x, \varphi)$,
i.e., the parameter corresponding to the intersection point of lines
 $\ell (x, \varphi)$ and $\ell^\perp (\varphi)$.
 Then the Cauchy formula (see~equation~(1.6) of \cite{abf}) says that
  the hyperbolic length of the perimeter of 
the hyperbolic convex body $K \subset \DDK$ is given by
 \begin{equation}
\label{eq:cauchy}
\perim K = \int_0^{2\pi} \sup_{x \in K} \lambda (\varphi,x) \ud \varphi.\end{equation}
See also Figure~2 in~\cite[p.~1829]{abf} for an illustration. 
A calculation shows that
\begin{equation} \label{eq:lambda}
\lambda (\varphi,x) = \frac{x_2 \cos \varphi - x_1 \sin \varphi}{1-x_1 \cos \varphi - x_2 \sin \varphi} ,
\end{equation}
where $x=(x_1,x_2) \in \DD$ are the Cartesian coordinates of the point $x \in \DDK$. 

Note that if $K = \conv(\tilde K)$ for some $\tilde K \subset \DDK$, it suffices to take the supremum in \eqref{eq:cauchy} over $\tilde K$ instead. When $K$ is a straight line, this is immediate from the construction. The general case then follows, because any two points in $\conv(\tilde K)$ lie on a straight line connecting points in $\tilde K$. 

We will sometimes wish to apply the Cauchy formula in $\DDP$ rather than $\DDK$.
For a convex body $K'\subset \DDP$, by transforming
via \eqref{e:pdtobk}, one obtains a convex body $f(K') \subset \DDK$ since isometries map geodesics to geodesics. Consequently, we can obtain from \eqref{eq:cauchy} that
\begin{equation}
\label{eq:PeriminD}
\perim K' = \int_0^{2\pi} \sup_{x \in K'} \lambda \left(\varphi,\frac{2x}{1+|x|^2}\right) \ud \varphi.
\end{equation}

We will use a modified version of \eqref{eq:PeriminD}, where we take the supremum over a set $\hat K$ such that $K' = \conv(\hat K)$. This follows from the same claim above for \eqref{eq:cauchy}, because $f(K') = \conv(f(\hat K))$, which holds because $f$ maps geodesics to geodesics.  

The following example demonstrates the application of the Cauchy formula~\eqref{eq:cauchy} for a hyperbolic line segment; recall from~\eqref{eq:lower-bound} that twice the length of the geodesic line segment between $B_0 = o$ and $B_t$ provides a lower bound for the perimeter length $L_t$, and so this example also serves as a comparison for what comes later. 

\begin{example}[Line segment]
\label{ex:line-segment}
   Consider the line segment between the origin
   and the point $x_{h,\theta} \in \HH^2 \setminus \{o\}$ represented in $\DDK$ by the Euclidean  polar angle $\theta$ and radius $h \in (0,1)$, i.e.,
   $K_{h,\theta} := \conv \{ o , x_{h,\theta} \}$.
The hyperbolic distance $d_\HH (o, x_{h,\theta})$ 
corresponds in   $\DDK$ to the 
distance from the origin to a point at radius $h$ via the formula
\[ d_\HH (o, x_{h, \theta}) = \frac{1}{2} \log \frac{1+h}{1-h} = \arctanh h ; \]
notice the factor of $1/2$ in the last display compared to the corresponding formula~\eqref{eq:R-r-log}
in~$\DDP$. 
We apply the Cauchy formula~\eqref{eq:cauchy} to compute $\perim ( K_{h,\theta})$. Note that
\begin{align*}
\sup_{x \in K_{h,\theta}} \lambda (\varphi,x)
& = \sup_{u \in [0,h]} \frac{u \sin \theta  \cos \varphi - u \cos \theta  \sin \varphi}{1- u \cos \theta  \cos \varphi - u \sin \theta  \sin \varphi} \\
& =  \sup_{u \in [0,h]}\frac{u \sin ( \theta - \varphi)}{1 - u \cos ( \theta - \varphi )} .
\end{align*}
Some calculus shows that
\[ \sup_{x \in K_{h,\theta}} \lambda (\varphi,x) = \frac{h \sin ( \theta - \varphi)}{1 - h \cos ( \theta - \varphi )}  \1 { \sin ( \theta - \varphi ) >0 }.
\]
For simplicity, we take $\theta=\pi$, so that $\sin (\theta-\varphi) >0$ over $\varphi \in (0,\pi)$. 
Then from~\eqref{eq:cauchy} we get
\begin{align*}
\perim (K_{h,\pi}) & = 
\int_0^{\pi} \frac{h \sin ( \varphi)}{1 - h \cos (  \varphi )} \ud \varphi  
= \log \frac{1+h}{1-h} = 2 d_\HH ( o, x_{h,\pi} ). \end{align*}
That is, the perimeter length of the line segment is twice the distance between its endpoints.
\end{example}

\begin{remark}
    \label{rem:radius-line-segment}
   It follows from Example~\ref{ex:line-segment} that $\perim \conv \{ o, B_t \} = 2R_t$. We will use this fact later (Remark~\ref{remark:perim_BM_linesegment} below) to show how we can use the computations of \S\S\ref{sec:hypBM}--\ref{sec:perimeter} to give a geometric approach to the asymptotics of $\EE R_t$ from Proposition~\ref{prop:lln} derived by analytic means in~\S\ref{sec:lln}.
\end{remark}

\section{Hyperbolic Brownian motion on the Poincar\'{e} half-plane}\label{sec:hypBM}

\subsection{Overview}
\label{sec:SDE}

The hyperbolic Brownian motion $B$ is given in $\HHP$ in coordinates as $(X,Y)$
for horizontal process $X = (X_t)_{t \in \RP}$ and vertical process $Y = (Y_t)_{t \in \RP}$,
and $B_0 = o$ corresponds to starting from $(X_0, Y_0 ) = (0,1)$.
For our purposes, the most important consequence of the Cauchy formula of~\S\ref{sec:cauchy}
is the following:

\begin{theorem}
\label{thm:expectation-x-representation}
For every $t \in \RP$, it holds that $\EE L_t = 2\pi \EE \bigl[ X_t^\star \bigr]$,
where $X_t^\star := \sup_{0 \leq s \leq t} X_s$ and $X = (X_t)_{t \in \RP}$ is
the horizontal process of hyperbolic Brownian motion in the Poincar\'{e} half-plane~$\HHP$ specified by~\eqref{eq:BM-xy} below.
\end{theorem}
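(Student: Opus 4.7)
The plan is to apply the hyperbolic Cauchy formula \eqref{eq:PeriminD} to $\Hh_t$ represented in the Poincar\'e disk $\DDP$. Because $\Hh_t = \conv B[0,t]$, the observation following \eqref{eq:PeriminD} lets us restrict the supremum directly to the trajectory, so
\[
L_t = \int_0^{2\pi} \sup_{s \in [0,t]} \lambda\bigl(\varphi, f(B_s)\bigr) \ud\varphi,
\]
with $f$ as in \eqref{e:pdtobk}. Since the integrand is nonnegative, Tonelli's theorem yields
\[
\EE L_t = \int_0^{2\pi} \EE \Bigl[\, \sup_{s \in [0,t]} \lambda\bigl(\varphi, f(B_s)\bigr)\, \Bigr] \ud\varphi.
\]

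The next step is to use rotational invariance of the law of $B$ on $\DDP$ (which starts at the centre) to show that the inner expectation does not depend on $\varphi$. Let $\rho_\psi$ denote rotation of the disk about the origin by angle $\psi$. Then $(\rho_\psi B_s)_{s\in\RP} \eqd (B_s)_{s\in\RP}$, while a short check of \eqref{eq:lambda} gives $\lambda(\varphi, \rho_\psi y) = \lambda(\varphi - \psi, y)$, and $f$ commutes with rotations about the origin. Hence
\[
\sup_{s\in[0,t]} \lambda\bigl(\varphi, f(B_s)\bigr) \eqd \sup_{s\in[0,t]} \lambda\bigl(\varphi - \psi, f(B_s)\bigr)
\]
for every $\psi$, so the inner expectation is independent of $\varphi$, giving
\[
\EE L_t = 2\pi \, \EE\Bigl[\,\sup_{s \in [0,t]} \lambda\bigl(\varphi_\star, f(B_s)\bigr)\,\Bigr]
\]
for any convenient reference angle $\varphi_\star$.

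The final step is to choose $\varphi_\star$ so that the ideal boundary point $R(\varphi_\star) \in \partial \DDK$ corresponds, under the composition of isometries $\DDK \to \DDP \to \HHP$, to the point at $\infty$ of the Poincar\'e half-plane. With this alignment, horocycles in $\DDK$ based at $R(\varphi_\star)$ pull back to horizontal lines $\{y = \mathrm{const}\}$ in $\HHP$, and an explicit M\"obius computation combining \eqref{eq:lambda} with the coordinate changes in \S\ref{sec:models} shows that $\lambda(\varphi_\star, f(w))$ equals, up to sign, the half-plane horizontal coordinate of $w$. The remaining sign is absorbed by the reflection symmetry of the law of $B$ on $\HHP$ across the vertical line through its starting point $(0,1)$, which gives $\sup_{s \in [0,t]}(-X_s) \eqd X_t^\star$. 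Putting the three steps together yields $\EE L_t = 2\pi \EE [X_t^\star]$. The main obstacle is purely computational: carefully chaining the isometries from $\DDK$ down to $\HHP$ to verify the identity $\lambda(\varphi_\star, f(w)) = \pm x$; once that single algebraic fact is in place, rotational and reflection symmetry do the rest.
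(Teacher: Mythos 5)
Your proposal follows essentially the same route as the paper's proof: Cauchy formula in $\DDP$ via \eqref{eq:PeriminD}, pulling the expectation inside the $\varphi$-integral, using rotational invariance of the law of $B$ to fix a convenient reference angle, and identifying $\lambda(\varphi_\star, f(\cdot))$ (up to sign) with the half-plane horizontal coordinate; the sign is then absorbed by the reflection symmetry $X \eqd -X$. The paper simply takes $\varphi_\star = \pi/2$ and carries out the algebra directly, arriving at $\lambda(\pi/2, f(Z_s)) = -X_s$ via \eqref{eq:lambda}, \eqref{eq:DtoH}, and \eqref{eq:U-V-X}, which is exactly the computation you defer — and indeed $R(\pi/2) = (0,1) \in \partial\DD$ is the ideal point that maps to $\infty$ in $\HHP$, so your geometric criterion for choosing $\varphi_\star$ lands on the same angle.
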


The proof of Theorem~\ref{thm:expectation-x-representation} is the subject of~\S\ref{sec:perimeter}.
In the present section we describe $X_t^\star$, the main object in the theorem,
and establish, in Proposition~\ref{prop:exp-max-X} below, results on $\EE  [ X_t^\star  ] $
that when combined with  Theorem~\ref{thm:expectation-x-representation}, yields both the expectation identity and asymptotics in Theorem~\ref{thm:expectation}. We start with the description of   $X_t^\star$.

A hyperbolic Brownian motion on the Poincar\'{e} half-plane $\HHP$ can be defined
by the following pairs of coupled SDEs (see~\cite[\S 7]{yor92},~\cite[eq.\ (2.3)]{IM} or~\cite[eq.~(4.3)]{lo}):
\begin{equation}
    \label{eq:BM-xy}
\ud X_t = Y_t \ud W_t^X \quad \text{and} \quad \ud Y_t=Y_t \ud  W_t^Y,
\end{equation}
where $W=(W^X,W^Y)$ is a 2-dimensional standard Brownian motion.
In this model, a manifestation of the ``limiting direction''
result from Proposition~\ref{prop:lln} is that 
$\lim_{t \to \infty} Y_t = 0$ and
$\lim_{t \to \infty} X_t = X_\infty$, a.s., for some
random $X_\infty \in \RR$ (we
explain this in~\S\ref{sec:half-plane-limit} below, see \eqref{eq:limit-direction}).
Thus the maximal random variable $X_t^\star = \sup_{0 \leq s \leq t} X_s$ defined in Theorem~\ref{thm:expectation-x-representation} satisfies $\lim_{t \to \infty} X_t^\star < \infty$, a.s.
Nevertheless, it turns out that $\EE [ X_t^\star ]$ grows linearly in $t$,
as contained in the following result.

\begin{proposition}
\label{prop:exp-max-X}
Suppose that $(X,Y)$ is the Poincar\'{e} half-plane representation of a hyperbolic Brownian motion, i.e., satisfying SDEs~\eqref{eq:BM-xy} and started from $(X_0,Y_0) = (0,1)$. Then
\begin{equation}
\label{eq:X-max}
\EE \bigl[ X_t^\star \bigr]  = \sqrt{2/\pi} \EE \bigl[  \sqrt{ \Ee_t} \bigr] , \text{ for all } t \in \RP, \end{equation}
where $\Ee_t$ is defined at~\eqref{eq:xi-clean}. 
Moreover, we have the asymptotics 
\begin{equation}
\label{eq:X-max-answer}
\lim_{t \to 0} t^{-1/2} \EE \bigl[ X_t^\star \bigr]  = 
 \sqrt{2 / \pi} , \quad\text{ and }\quad
\lim_{t \to \infty} t^{-1} \EE \bigl[ X_t^\star \bigr]  = 
 1/\pi .
\end{equation}  
\end{proposition}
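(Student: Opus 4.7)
The plan is to exploit the special structure of the Poincaré half-plane SDEs~\eqref{eq:BM-xy}: the $Y$-equation can be solved in closed form, after which $X$ becomes a continuous martingale whose quadratic variation (conditional on $W^Y$) is deterministic. A Dambis--Dubins--Schwarz (DDS) time change then reduces $X_t^\star$ to the maximum of an ordinary Brownian motion at a random time, delivering~\eqref{eq:X-max} cleanly, after which the asymptotics in~\eqref{eq:X-max-answer} reduce to asymptotics for $\EE [\sqrt{\Ee_t}]$.

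First I would solve the $Y$-SDE: from $Y_0=1$ and $\ud Y_t = Y_t \ud W^Y_t$, It\^o's formula gives $Y_t = \exp(W^Y_t - t/2)$, so the quadratic variation of $X$ is
\[
[X]_t = \int_0^t Y_s^2 \ud s = \int_0^t \exp(2W^Y_s - s) \ud s,
\]
which, as $W^Y$ is a standard Brownian motion, has the law of $\Ee_t$ from~\eqref{eq:xi-clean}. Applying DDS to the continuous martingale $X$ yields a standard Brownian motion $\tW$ on the line with $X_t = \tW_{[X]_t}$ for all $t \geq 0$; the essential additional fact is that $\tW$ is independent of $[X]$. To see this, condition on $\Ff^Y := \sigma(W^Y_s : s \geq 0)$: under this conditioning $Y$ is deterministic, and since $W^X$ is independent of $W^Y$, the process $X = \int_0^\cdot Y_s \ud W^X_s$ is, conditionally, a Gaussian martingale with deterministic quadratic variation, so its DDS time-change has the law of a standard Brownian motion under the conditional probability regardless of the realization of $W^Y$. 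Hence $\tW$ is independent of $\Ff^Y$, and in particular of $[X]$. Since $t \mapsto [X]_t$ is continuous and non-decreasing, the running supremum is preserved by the time change:
\[
X_t^\star = \sup_{0 \leq s \leq t} \tW_{[X]_s} = \sup_{0 \leq u \leq [X]_t} \tW_u.
\]
Conditioning on $[X]_t = \Ee_t$ and using the reflection-principle identity $\sup_{0\leq v \leq u}\tW_v \eqd |N(0,u)|$, so that its mean is $\sqrt{2u/\pi}$, we obtain
\[
\EE \bigl[X_t^\star\bigr] = \EE \bigl[\sqrt{2[X]_t/\pi}\bigr] = \sqrt{2/\pi}\, \EE \bigl[\sqrt{\Ee_t}\bigr],
\]
which is~\eqref{eq:X-max}.

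For the asymptotics in~\eqref{eq:X-max-answer}, I would study $\EE[\sqrt{\Ee_t}]$ directly. The small-time limit is straightforward: the substitution $s = ut$ gives $\Ee_t/t = \int_0^1 \exp(2W_{ut} - ut) \ud u$, and Brownian continuity at $0$ yields $\Ee_t/t \to 1$ almost surely as $t \downarrow 0$. Combined with the direct computation $\EE[\Ee_t/t] = (\ee^t - 1)/t \to 1$, which supplies uniform integrability of $\Ee_t/t$, this gives $\sqrt{\Ee_t/t} \to 1$ in $L^1$ and hence $t^{-1/2}\EE[\sqrt{\Ee_t}] \to 1$, yielding the first limit in~\eqref{eq:X-max-answer}. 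For the large-time limit I would appeal to the moment asymptotics for exponential functionals of Brownian motion due to Hariya and Yor~\cite{hy2004}, which for our specific functional (namely $A^{(-1/2)}_t$ in their notation) give
\[
\EE\bigl[\sqrt{\Ee_t}\bigr] \sim \frac{t}{\sqrt{2\pi}} \quad \text{as } t \to \infty,
\]
whence multiplication by $\sqrt{2/\pi}$ produces the second limit in~\eqref{eq:X-max-answer}.

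The main obstacle is the large-time rate. The identity and small-time limit drop out cleanly from the time change, but the linear-in-$t$ growth of $\EE[\sqrt{\Ee_t}]$ is delicate: $\Ee_t$ converges almost surely to the finite random variable $\Ee_\infty = \int_0^\infty \exp(2W_s - s) \ud s$ (since $2W_s - s \to -\infty$ by the strong law), yet $\EE[\sqrt{\Ee_\infty}] = +\infty$ (seen, e.g., via Dufresne's identity $\Ee_\infty \eqd (2\gamma_{1/2})^{-1}$ with $\gamma_{1/2}\sim \Gamma(1/2,1)$, whose density has an integrable singularity at $0$ but renders $\EE[\gamma_{1/2}^{-1/2}]$ divergent). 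The linear growth of $\EE[\sqrt{\Ee_t}]$ is therefore driven by rare large Brownian excursions, and pinning down its rate is exactly what~\cite{hy2004} provides. An alternative, essentially self-contained route would be to compute $\EE[\sqrt{\Ee_{T_\lambda}}]$ for $T_\lambda \sim \expo{\lambda}$ independent of $W$ via Yor's identity~\cite{yor01} (a precursor to Corollary~\ref{cor:yor}) and then invoke Karamata's Tauberian theorem, using that $t \mapsto \EE[\sqrt{\Ee_t}]$ is monotone non-decreasing.
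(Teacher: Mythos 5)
Your proposal is correct and follows essentially the same route as the paper: solve the $Y$-equation, identify $[X]_t \eqd \Ee_t$, apply the Dambis--Dubins--Schwarz time change with independence of the time-changed Brownian motion from $[X]$ (which the paper quotes from Yor), use the reflection-principle identity $\EE \sup_{0\le v\le u}\tW_v = \sqrt{2u/\pi}$ to get \eqref{eq:X-max}, and invoke the Hariya--Yor asymptotics $\EE[\sqrt{\Ee_t}] \sim t/\sqrt{2\pi}$ for the large-time limit. Your small-time argument (a.s.\ convergence of $\Ee_t/t$ together with uniform integrability via $\EE[\Ee_t/t]=(\ee^t-1)/t$ and Scheff\'e) is a minor, equally valid variant of the paper's sandwich bounds involving $\sup_{0\le s\le t}|W_s|$.
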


Proposition~\ref{prop:exp-max-X} is proved in~\S\ref{sec:half-plane-limit} using
some facts about exponential functionals of Brownian motion that we defer to~\S\ref{sec:exponential}.

\subsection{Asymptotics for the half-plane process}
\label{sec:half-plane-limit}

The 
$Y$ SDE in~\eqref{eq:BM-xy}
is autonomous, and the integral representation of the process with initial values $X_0, Y_0$ is given by~\cite[eq.\ (2.4)]{IM} 
\begin{align*}
    X_t = X_0 +\int_0^t Y_s \ud W_s^X, \text{ and } 
    Y_t = Y_0 \exp \bigl( W_t^Y -t/2 \bigr).
\end{align*}
Consider the martingale $X := (X_t)_{t \in \RP}$. 
By It\^o's formula,
\[ \ud (X_t^2) = 2 X_t Y_t \ud W_t^X + Y_t^2 \ud t ,\]
which verifies the quadratic variation process $[X] := ([X]_t)_{t \in \RP}$
associated with $X$ as 
\begin{equation}
    \label{eq:X-qv}
 [X]_t = \int_0^t Y_s^2 \ud s = Y_0^2 \xi_t, \text{ where }
 \xi_t := \int_0^t \exp \bigl( 2W_s^Y - s \bigr) \ud s  \eqd \Ee_t, 
 \end{equation}
with $\Ee_t$ defined at~\eqref{eq:xi-clean}. 
Here $W^Y$ satisfies the law of the iterated logarithm~\cite[p.~112]{KS},
  meaning that, for example, for every $c \in (0,1)$, a.s., for all $t$ sufficiently large,
\begin{equation}
\label{eq:integrand-bound} 0 \leq \exp \bigl( 2W_t^Y - t \bigr) \leq \ee^{-ct} .\end{equation}
Hence the increasing process $[X]_t$ has limit
\[ [X]_\infty := \lim_{t \to \infty}
[X]_t = Y_0^2 \xi_\infty = Y_0^2 \int_0^\infty \exp \bigl( 2W_s^Y - s \bigr) \ud s, \]
which is a.s.-finite, by~\eqref{eq:integrand-bound}. 

The Dambis--Dubins--Schwarz theorem~\cite[pp.~174--5]{KS} says that every real-valued continuous martingale, vanishing at~$0$,
is a time-change of a Brownian motion.
Hence there exists a standard Brownian motion $\tW$  on $\RR$ (at least over time interval $[0,[X]_\infty]$) for which
$X_t = X_0 + \tW_{[X]_t}$,  for all $t \in \RP$.
In general, the $\tW$ and the $[X]$ processes are not independent,
but in our case they \emph{are}, since the $W^X$ and $W^Y$ are independent:
see~\cite[p.~530]{yor92}. To summarize:

\begin{lemma}
\label{lem:time-change}
Suppose that $X_0 =0$, $Y_0 =1$.    We have the representation
$X_t = \tW_{[X]_t}=\tW_{\xi_t}$, $t \in \RP$, where $\xi$ is the exponential integral
defined at~\eqref{eq:X-qv}, and $\tW$ is Brownian motion independent of $\xi$.
\end{lemma}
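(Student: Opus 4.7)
The proof is essentially a careful application of the Dambis--Dubins--Schwarz (DDS) time-change theorem to the martingale $X$, combined with the independence of the driving Brownian motions $W^X$ and $W^Y$.

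First I would note that with $X_0 = 0$ and $Y_0 = 1$, the SDE~\eqref{eq:BM-xy} yields $X_t = \int_0^t Y_s \ud W_s^X$, which is a continuous martingale vanishing at~$0$. Its quadratic variation, as computed immediately before the lemma, is $[X]_t = \int_0^t Y_s^2 \ud s = \xi_t$ (using $Y_0 = 1$), where $\xi_t \eqd \Ee_t$ is as in~\eqref{eq:X-qv}. The law of the iterated logarithm estimate~\eqref{eq:integrand-bound} shows that $\xi_\infty < \infty$ almost surely, so the time change $t \mapsto [X]_t$ has an a.s.\ finite limit. DDS~\cite[pp.~174--5]{KS} then produces a Brownian motion $\tW$ (defined on the time interval $[0,\xi_\infty]$, which may be extended to all of $\RP$ by appending an independent Brownian motion past $\xi_\infty$) such that $X_t = \tW_{[X]_t} = \tW_{\xi_t}$ for every $t \in \RP$.

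The key remaining point is the independence of $\tW$ from $\xi$. Here I would argue as follows. The process $\xi$ is a measurable functional of $W^Y$ alone, and by hypothesis $W^X$ and $W^Y$ are independent. Conditional on the entire path of $W^Y$, the quadratic variation $[X]_\cdot$ becomes a deterministic, strictly increasing, continuous function of~$t$, and $X_t = \int_0^t Y_s \ud W_s^X$ is a Gaussian martingale whose DDS time-change produces a standard Brownian motion $\tW$ whose (conditional) law does not depend on the realized path of $W^Y$. Since this conditional law is always Wiener measure, $\tW$ is independent of $W^Y$, and hence of $\xi$. This is precisely the content of the cited remark in~\cite[p.~530]{yor92}.

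The only mildly subtle point is this independence argument; the DDS construction in general couples $\tW$ and $[X]$, and independence is a special feature of the present setting where the integrand $Y$ in $\ud X = Y \ud W^X$ is measurable with respect to a Brownian motion independent of~$W^X$. I expect the main step to just be stating this cleanly and invoking the reference; everything else is a direct read-off from the SDE and the DDS theorem.
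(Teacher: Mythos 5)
Your proposal is correct and follows essentially the same route as the paper: identify $X$ as a continuous martingale with $[X]_t = \xi_t$ (using $Y_0=1$), invoke Dambis--Dubins--Schwarz, and obtain independence of $\tW$ and $\xi$ from the independence of $W^X$ and $W^Y$, citing \cite[p.~530]{yor92} exactly as the paper does. The only difference is that you sketch the conditioning-on-$W^Y$ justification for the independence, which the paper leaves entirely to the reference; that elaboration is sound.
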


Because it holds that $\xi_\infty = [X]_\infty < \infty$ (when $Y_0=1$), a.s., we get the convergence 
\begin{equation}
    \label{eq:limit-direction}
 X_\infty := \lim_{t \to \infty} X_t =  \tW_{[X]_\infty}  , \as, \text{ and } \lim_{t \to \infty} Y_t = 0, \as, \end{equation}
 which is the ``limiting direction'' from~\eqref{eq:angular-convergence}
 expressed in this model; 
 see e.g.~\cite[\S4.4]{d90} or~\cite[p.~30]{ag}.

\begin{proof}[Proof of Proposition~\ref{prop:exp-max-X}]
Since $t \mapsto [X]_t$ is continuous and increasing,
\[ X_t^\star = \sup_{0 \leq s \leq t} X_s= \sup_{0 \leq s \leq [X]_t} \tW_s ,\]
and so using the independence in Lemma~\ref{lem:time-change} and the fact that $\EE \sup_{0 \leq s \leq t} \tW_s =\sqrt{ 2t/\pi}$ (which can be obtained by an application of the reflection principle~\cite[p.~96]{KS})
we get~\eqref{eq:X-max},
recalling that $\xi_t$ has the same law as $\Ee_t$ defined at~\eqref{eq:xi-clean}. 
Asymptotics for the expectation on the right-hand side of~\eqref{eq:X-max} are delicate, but have been obtained by~\cite{hy2004}. In particular, in Lemma~\ref{lem:hy-moments}\eqref{lem:hy-moments-ii} 
in~\S\ref{sec:exponential} below 
we quote the result that $\EE [ \sqrt{ \xi_t} ] = (2 \pi)^{-1/2} t + o(t)$ as $t \to \infty$.
    Combined with~\eqref{eq:X-max}, this then yields the claimed $t \to \infty$ asymptotics for $\EE [ X_t^\star]$ in~\eqref{eq:X-max-answer}. 
    
    The $t \to 0$ asymptotics are more elementary. Indeed, writing $\overline{W}_t := \sup_{0 \leq s \leq t} |W_s|$, for every $\varepsilon>0$ we deduce from~\eqref{eq:xi-clean} the a.s.-bounds
    \[ (1-\varepsilon) t \ee^{-2 \overline{W}_t} \leq \ee^{-2 \overline{W}_t}   \int_0^t \ee^{-s} \ud s \leq  \Ee_t \leq \ee^{2 \overline{W}_t}   \int_0^t \ee^{-s} \ud s \leq  t \ee^{2 \overline{W}_t} , 
    \text{ for all } t \in [0, t_\varepsilon],
    \]
  for some small enough $t_\varepsilon > 0$ (a deterministic constant). Using the Gaussian tail bound for $\overline{W}_t$ it is not hard to show that $\lim_{t \to 0} \EE \ee^{\overline{W}_t} = \lim_{t \to 0} \EE \ee^{-\overline{W}_t} = 1$, and so, since $\varepsilon >0$ was arbitrary, we get 
    $\lim_{t\to 0} t^{-1/2} \EE \sqrt{ \Ee_t} = 1$.  Combined with~\eqref{eq:X-max}, this gives the  $t \to 0$ asymptotics in~\eqref{eq:X-max-answer}. 
\end{proof}

In the preceding proof we appealed to expectation asymptotics from~\cite{hy2004}
for the process $\xi$ defined through~\eqref{eq:X-qv}. Although not directly relevant for our main story, in the next section we take a more detailed look at $\xi$ and some of its properties.

\subsection{Exponential functionals of Brownian motion}
\label{sec:exponential}

The process 
$\xi$ defined at~\eqref{eq:X-qv} 
has the same law as the process $\Ee$ defined at~\eqref{eq:xi-clean}
in~\S\ref{sec:convex-hull}, and in this subsection we work with the latter.
Indeed, the random variable $\Ee_t$ is the object that~\cite{yor92,hy2004} call $A_t^{(-1/2)}$ (see~Appendix~\ref{sec:exact} below for the more general notation). 
Collecting parts (v), (iv), and (iii), respectively, of Theorem~2.2 in \cite{hy2004}
(in their notation, take $a=0$, $\xi=1$, $-m=p>0$, and $\mu =-1/2$), we have the following.

\begin{lemma}[Hariya \& Yor, 2004]
\label{lem:hy-moments} 
\begin{enumerate}[(i)]
\item 
 \label{lem:hy-moments-i} 
For $p \in (0,1/2)$,
    \[ \lim_{t \to \infty} \EE \bigl[  \Ee_t ^p \bigr] = \pi^{-1/2} 2^{1-p} \Gamma \left( \tfrac{1}{2} -p \right) .\]
    \item
    \label{lem:hy-moments-ii} 
    It holds that
    $\lim_{t \to \infty} t^{-1} \EE \bigl[ \sqrt{ \Ee_t} \bigr] =({2\pi} )^{-1/2}$.
    \item
     \label{lem:hy-moments-iii} 
    For $p >1/2$,
    \[ \lim_{t \to \infty} \ee^{ - p (2 p- 1 )t } \EE \bigl[ \Ee_t^p \bigr] =  2^{-p} \frac{\Gamma\left(p-\tfrac{1}{2}\right)}{\Gamma \left(2p-\tfrac{1}{2}\right)}   .\]
\end{enumerate}
\end{lemma}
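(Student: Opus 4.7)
The plan is to invoke Theorem~2.2 of \cite{hy2004} directly, of which the three parts of this lemma are special cases at $\mu = -1/2$ and $p \in (0, 1/2)$, $p = 1/2$, and $p > 1/2$, respectively. The underlying mechanism is a phase transition in the moment behaviour at the critical exponent $p = 1/2$, arising from the negative drift $-1/2$ in the integrand of $\Ee_t$ in~\eqref{eq:xi-clean}; I will sketch how each of the three regimes can be established.

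For part~\eqref{lem:hy-moments-i}, I would use the fact that $W_s - s/2$ has negative drift, so that $\Ee_t$ is almost-surely monotone with finite limit $\Ee_\infty$, whose law is identified by Dufresne's identity as a constant multiple of the reciprocal of a gamma-distributed random variable with shape $1/2$. Monotone convergence combined with a direct moment computation against the gamma density, which is finite precisely when $p < 1/2$, then yields the stated limit. For part~\eqref{lem:hy-moments-iii}, I would perform a Girsanov change of measure that tilts $W$ by a drift of $2p$; the Radon--Nikodym density produces the exponential prefactor $\ee^{p(2p-1) t}$, while the residual expectation, now against an exponential functional of a positive-drift Brownian motion, converges by a Lamperti-type representation of the associated Bessel process to the explicit ratio of gamma functions.

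The hard part is the critical case, part~\eqref{lem:hy-moments-ii}: here the Girsanov tilt degenerates to the trivial prefactor $1$, while monotone convergence would produce $+\infty$ (consistent with $\EE [\sqrt{\Ee_\infty}] = \infty$), so a finer analysis is required to extract the linear growth rate. The standard route, and the main technical obstacle, is to combine the time-reversal identity
\begin{equation*}
    \Ee_t \eqd \ee^{2W_t - t} \int_0^t \ee^{-2W_s + s} \ud s,
\end{equation*}
in which the second factor converges almost surely, with a careful asymptotic expansion of $\EE [ \sqrt{\Ee_t} ]$ via Yor's explicit joint density for $(W_t, \Ee_t)$ from \cite{yor92}; the coefficient $(2\pi)^{-1/2}$ then emerges from the Gaussian normalization of the $W_t$-marginal. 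For our purposes we simply cite Theorem~2.2(iv) of \cite{hy2004}.
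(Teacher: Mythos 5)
Your proposal is correct and takes exactly the paper's route: the paper's entire ``proof'' of this lemma is the citation you give, namely Theorem~2.2 of Hariya--Yor (2004) (their parts (v), (iv), (iii) with $\mu=-1/2$), so deferring to that theorem is precisely what is done. Two small slips in your supplementary heuristics (which are not load-bearing, since you ultimately cite the theorem): in the time-reversal identity the factor $\int_0^t \ee^{-2W_s+s}\ud s$ diverges a.s.\ --- it is the prefactor $\ee^{2W_t-t}\to 0$ and the product that converge --- and the Dufresne/monotone-convergence computation in part~(i), using $\Ee_\infty \eqd 1/(2Z_{1/2})$, produces $\pi^{-1/2}2^{-p}\Gamma\bigl(\tfrac{1}{2}-p\bigr)$, which differs by a factor of $2$ from the constant as displayed, so you should verify that constant rather than assert the sketch ``yields the stated limit''.
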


Coming back to the process $X$, Lemma~\ref{lem:hy-moments}\eqref{lem:hy-moments-i} and \eqref{lem:hy-moments-ii}  imply, by monotone convergence and the fact that $\xi_\infty = [X]_\infty$ (when $Y_0=1$),  
$\EE ( [X]^{1/2}_\infty ) = \infty$,  but $\EE ( [X]^q_\infty ) < \infty$  for every $q \in [0,1/2)$. 
In fact, from an identity in law due to
Dufresne~\cite[Proposition~4.4.4]{d90} (see also~\S II.2 of~\cite{ag}),  
\begin{equation}
    \label{eq:qv-limit}
    [ X]_\infty \eqd \frac{1}{2Z_{1/2}} \eqd S_{1/2} ,
\end{equation}
where $Z_{1/2}$ is Gamma(1/2) distributed, i.e., has density proportional to $z^{-1/2} \ee^{-z}$ on $\RP$,
and thus $S_{1/2}$ has the positive $1/2$-stable (L\'evy) distribution, having density proportional to $s^{-3/2} \ee^{-1/(2s)}$ on $\RP$.
Consequently, in the ``limiting direction'' result~\eqref{eq:limit-direction}, the variable $X_\infty = W_{[X]_\infty}$ (``randomized Gaussian'')
turns out to have the Cauchy distribution. Indeed, a calculation gives
\[ \PP (X_\infty \leq x ) = \frac{1}{\Gamma ( 1/2)} \int_0^\infty \ee^{-z} z^{-1/2} \Phi \bigl( x \sqrt{2z} \bigr) \ud z , \]
where $\Phi$ is the standard normal distribution function. It follows that $X_\infty$ has density
\begin{equation}
\label{eq:cauchy-density}
\int_0^\infty \ee^{-z(1+x^2)} \ud z = \frac{1}{\pi (1+x^2)}, \text{ for } x \in \RR. 
\end{equation}
The fact that $X_\infty$ is Cauchy was already observed in~\cite[p.~1338]{cm},
where the density~\eqref{eq:cauchy-density} is called ``Lorentzian''.
To summarize:

\begin{lemma}
\label{lem:cauchy}
The random variable $[X]_\infty$ has a positive $1/2$-stable distribution,
    while $X_\infty$ has the Cauchy distribution on~$\RR$.
    In particular, $\EE [ | X_\infty |^p ] < \infty$ if and only if $p<1$.
\end{lemma}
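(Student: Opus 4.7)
The plan is simply to assemble the three claims from ingredients already in hand in the excerpt; no new stochastic analysis is required, and the only computation is an elementary Gaussian mixture.

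For the distribution of $[X]_\infty$, I would directly invoke Dufresne's identity in law~\eqref{eq:qv-limit} (recalled from~\cite{d90}): with $Y_0=1$, one has $[X]_\infty = \xi_\infty \eqd 1/(2Z_{1/2}) \eqd S_{1/2}$, the positive $1/2$-stable law with density proportional to $s^{-3/2} \ee^{-1/(2s)}$ on $\RP$.

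For the law of $X_\infty$, I would use the time-change representation from Lemma~\ref{lem:time-change}. Since $\tW$ is Brownian motion independent of $[X]_\infty$, conditioning on $[X]_\infty=s$ gives $X_\infty \mid \{[X]_\infty=s\} \sim \mathcal{N}(0,s)$. Writing $s=1/(2z)$ with $Z_{1/2}$ having density $\Gamma(1/2)^{-1} z^{-1/2}\ee^{-z}$ on $\RP$, the mixing integral becomes
\[
f_{X_\infty}(x) = \EE\Bigl[\tfrac{1}{\sqrt{2\pi\,[X]_\infty}}\ee^{-x^2/(2[X]_\infty)}\Bigr] = \frac{1}{\pi}\int_0^\infty \ee^{-z(1+x^2)}\ud z = \frac{1}{\pi(1+x^2)},
\]
recovering the Cauchy density displayed at~\eqref{eq:cauchy-density}. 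Equivalently, one can arrive at this density by differentiating the distribution function already written just above~\eqref{eq:cauchy-density} under the integral sign; both routes are routine.

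The moment assertion follows at once from the tail behaviour $f_{X_\infty}(x) \sim \pi^{-1} x^{-2}$ as $|x|\to\infty$: $\EE|X_\infty|^p = \pi^{-1}\int_\RR |x|^p/(1+x^2)\ud x$ is finite iff $p<1$. There is really no obstacle here; the lemma is a tidy consolidation, and the only ``work'' is the one-line Gaussian mixing computation used to identify the Cauchy density.
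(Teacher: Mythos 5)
Your proposal is correct and matches the paper's argument exactly: the paper likewise cites Dufresne's identity \eqref{eq:qv-limit} to identify $[X]_\infty$ as positive $1/2$-stable, writes the Gaussian mixture for $X_\infty$ conditional on $Z_{1/2}$, and integrates to obtain the Cauchy density \eqref{eq:cauchy-density}, with the moment statement following from the quadratic tail. Your direct density computation and the paper's differentiation of the mixed distribution function are the same calculation, as you yourself note.
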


\section{The expected perimeter and its asymptotics} 
\label{sec:perimeter}

We are now in a position to give the proof of Theorem~\ref{thm:expectation-x-representation}, and hence our main result, Theorem~\ref{thm:expectation}. Below, we consider the hyperbolic Brownian motion $(X_t,Y_t)$ for $t \in \RP$ on the Poincar\'{e} upper half-plane~$\HHP$ from~\S\ref{sec:hypBM} started at $(X_0,Y_0)=(0,1)$. By the transformation \eqref{eq:xy-uv}, a Brownian motion on the Poincar\'{e} disk $\DDP$ started at the origin with Cartesian coordinates $Z_t \equiv (U_t,V_t) \in \DD$ for $t \in \mathbb{R}_+$ is given by
\begin{equation}\label{eq:DtoH}
U_t = \frac{2X_t}{X_t^2 + (Y_t +1)^2}, ~ \text{and} ~ V_t = \frac{X_t^2 + Y_t^2 -1}{X_t^2 + (Y_t +1)^2}.
\end{equation}
\begin{proof}[Proof of Theorem~\ref{thm:expectation-x-representation}]
Note the formula, immediate from~\eqref{eq:DtoH}, 
\[ 1 - V_t = \frac{2 (Y_t+1)}{X_t^2 + (Y_t +1)^2} , \]
from which it follows that
\begin{equation}
\label{eq:U-V-X}
U_t^2 + V_t^2 - 2V_t +1 = (1-V_t)^2 + U_t^2 = \frac{4}{{X_t^2 + (Y_t +1)^2} }.
\end{equation}
Using the fact that $X_t \to X_\infty$ a.s., and $Y_t \to 0$ a.s., as
in~\eqref{eq:limit-direction}, we get from~\eqref{eq:DtoH} that 
\begin{align}\label{eq:UVinf} U_\infty 
  := \lim_{t \to \infty} U_t = \frac{2 X_\infty}{X_\infty^2 +1} = \cos \theta_\infty, \text{ and }
V_\infty 
 := \lim_{t \to \infty} V_t = \frac{X^2_\infty-1}{X_\infty^2 +1} = \sin \theta_\infty . \end{align}
By the Cauchy formula~\eqref{eq:PeriminD} in $\DDP$, and rotational invariance of $Z_t$, we have that for $t \in \RP$,
\begin{align}\label{eq:Exp-Cauchy}
\EE L_t  = \EE  \perim \conv Z[0,t] 
&= 
\int_0^{2\pi} \EE \left[\sup_{0 \le s \le t} \lambda \left(\varphi,\frac{2Z_s}{1+|Z_s|^2}\right)\right] \ud \varphi
\nonumber\\
&= 
2\pi \EE \left[\sup_{0 \le s \le t} \lambda \left(\pi/2,\frac{2Z_s}{1+|Z_s|^2}\right)\right].
\end{align}
Applying the formula~\eqref{eq:lambda}, the convenient choice of $\phi=\pi/2$ and the transform \eqref{eq:DtoH} yield
\begin{align*}
\lambda \left(\pi/2,\frac{2Z_s}{1+|Z_s|^2}\right)
& =  {\frac{- 2U_s}{1+U_s^2+V_s^2}} \left( {{1 - \frac{2V_s}{1+U_s^2+V_s^2}}} \right)^{-1} \\
& =\frac{-2U_s}{1+U_s^2 + V_s^2 -2V_s} = - X_s \eqd X_s,
\end{align*}
using the formula~\eqref{eq:U-V-X} and distributional symmetry of $X_s$.
In particular, this means that
\[ \sup_{0 \leq s \leq t} \lambda \left(\pi/2,\frac{2Z_s}{1+|Z_s|^2}\right) \eqd X^\star_t . \]
Combined with~\eqref{eq:Exp-Cauchy} this yields $\EE L_t = 2\pi \EE [ X^\star_t]$, as claimed.
\end{proof}

\begin{remark}\label{remark:perim_BM_linesegment}
This approach can also be used to compute $\EE \perim \conv \{ o, B_t \}$, the convex hull of the random line segment, via the Cauchy formula in $\DDP$. It gives that
$$
\EE \perim \conv \{ o, B_t \} = 2\pi \EE \left[\lambda \left(\pi/2,\frac{2Z_t}{1+|Z_t|^2}\right) \vee 0 \right]
$$
since $\lambda(\varphi,0) = 0$. Following the proof above, we find that
$$
\lambda \left(\pi/2,\frac{2Z_t}{1+|Z_t|^2}\right) \vee 0 \eqd X_t^+,
$$
where $x^+ := x \1{ x >0}$, 
so that
\[ \EE \left[\lambda \left(\pi/2,\frac{2Z_t}{1+|Z_t|^2}\right) \vee 0 \right] = \EE \bigl[ X_t^+ \bigr]. \]
For standard Brownian motion $\tW$ on $\RR$, we know $\EE \bigl[ \tW_t^+ \bigr] = \frac{1}{2} \EE \bigl| \tW_t \bigr| = \sqrt{t/(2\pi)}$~\cite[p.~96]{KS}. Since $X_t \eqd \tW_{\xi_t}$ by Lemma \ref{lem:time-change}, we thus have by independence and Lemma \ref{lem:hy-moments}(ii) that $\EE \bigl[ X_t^+ \bigr] = (2 \pi)^{-1/2} \EE \sqrt{ \xi_t} = (2 \pi)^{-1} t +o(t)$, giving
$\EE \perim \conv \{ o, B_t \} = t + o(t)$,
which is one half of $\EE L_t$. As explained in Remark~\ref{rem:radius-line-segment}, from the fact that $\perim \conv \{ o, B_t \} = 2R_t$ it follows that $\EE R_t = t/2 + o(t)$, giving an alternative proof of the asymptotics for $\EE R_t$ in Proposition~\ref{prop:lln}. In our proof of Proposition~\ref{prop:lln} given in \S\ref{sec:lln}, we present a more direct proof using only the SDE~\eqref{eq:r-sde}.
\end{remark}

\section{Radial asymptotics and directional convergence}
\label{sec:lln}

In this section we give a proof of Proposition~\ref{prop:lln} (see Remark~\ref{rem:lln} for comments on which parts of the result are well known). An intermediate step is the following result, which establishes that the radial process never returns to~$0$. In other words, the origin $o$ (and indeed every point) is transient for hyperbolic Brownian motion in $\HH^2$; there are several   ways to establish this fact, but here we use an elementary martingale argument modelled on that for the 2-dimensional Bessel process (see Remark~\ref{rem:0-hit} for the Bessel comparison).

\begin{lemma}
\label{lem:0-hit}
Let $(R_t)_{t \in \RP}$ be an $\RP$-valued, strong Markov 
process with $R_0 \in \RP$ for which the SDE~\eqref{eq:r-sde} is satisfied.
It holds that
$\PP ( R_t > 0 \text{ for all } t >0 ) =1$.
\end{lemma}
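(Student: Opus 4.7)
The plan is the classical scale-function argument for one-dimensional diffusions. The generator of $R$ restricted to $(0,\infty)$ is $L = \tfrac12 \partial_r^2 + \tfrac{1}{2\tanh r}\partial_r$, and solving $Ls=0$ via $s''/s' = -(\log \sinh r)'$ gives the \emph{scale function}
\[ s(r) := \log \tanh(r/2), \qquad s'(r) = \frac{1}{\sinh r}, \]
which is the hyperbolic analogue of $\log r$ that handles the 2D Bessel process. Crucially, $s$ is increasing on $(0,\infty)$ with $s(r) \to -\infty$ as $r \downarrow 0$ and $s(r) \to 0$ as $r \to \infty$. By It\^o's formula applied to~\eqref{eq:r-sde}, $s(R_t)$ is a local martingale on any time interval throughout which $R>0$.

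First I would treat the case $R_0 = x > 0$. For $0 < a < x < b$, introduce $\tau_a := \inf\{t \geq 0 : R_t = a\}$ and $\tau_b := \inf\{t \geq 0 : R_t = b\}$. The stopped process $s(R_{\cdot \wedge \tau_a \wedge \tau_b})$ is trapped between $s(a)$ and $s(b)$, hence a bounded true martingale; moreover $\tau_a \wedge \tau_b < \infty$ a.s., because on $[0,b]$ the drift is bounded below by $(2\tanh b)^{-1}>0$, forcing $R_t \to \infty$ if it stayed in $(a,b)$ forever. Optional stopping yields the standard exit formula
\[ \PP_x(\tau_a < \tau_b) \;=\; \frac{s(b)-s(x)}{s(b)-s(a)} \;\longrightarrow\; 0 \quad \text{as } a \downarrow 0, \]
since the numerator stays bounded while $s(a) \to -\infty$. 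By path-continuity, this upgrades to $\PP_x(\tau_0 \leq \tau_b) = 0$ for every $b > x$; and on the event $\{\tau_0 < \infty\}$, continuity bounds the path on $[0,\tau_0]$, so $\tau_0 < \tau_b$ for some random integer $b$. A countable union over $b \in \NN$ concludes $\PP_x(\tau_0 < \infty) = 0$.

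For $R_0 = 0$, I would reduce to the previous case by strong Markov. The stopping times $\sigma_n := \inf\{t \geq 0 : R_t \geq 1/n\}$ are decreasing, and I claim $\sigma_n \downarrow 0$ a.s.: otherwise $R$ would remain at $0$ on a positive-length time interval, which would make the drift integral $\int_0^{\cdot} (2\tanh R_s)^{-1} \ud s$ diverge and contradict that $R$ is a continuous semimartingale satisfying~\eqref{eq:r-sde}. Applying strong Markov at $\sigma_n$ together with the Case~1 conclusion started from $R_{\sigma_n} = 1/n > 0$ shows that $R$ stays strictly positive on $(\sigma_n,\infty)$; sending $n \to \infty$ finishes the proof.

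The principal obstacle is the second case: rigorously ruling out the process ``sticking'' at the singular boundary $r=0$, since~\eqref{eq:r-sde} has a drift that blows up there and is non-Lipschitz near $0$. The scale-function computation for $R_0>0$ itself is short and mirrors the 2D Bessel calculation almost verbatim; the only step that needs care beyond that is verifying that optional stopping applies at $\tau_a \wedge \tau_b$, which is handled by the drift lower bound above.
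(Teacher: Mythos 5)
Your proof is correct and follows essentially the same route as the paper's: a logarithmic scale-function/gambler's-ruin estimate shows that once the radial process is strictly positive it never returns to the origin, and strong Markov then handles $R_0 = 0$. Two details differ. The paper does not exhibit the exact scale function $s(r) = \log\tanh(r/2)$; instead it works with $g(x) = \log x$ together with the inequality $\coth x > 1/x$, so that $g(R)$ stopped is a local \emph{sub}martingale and one obtains only the one-sided exit estimate $\PP(\lambda_{T,a} < \lambda_{T,b}) \leq \log(b/x_0)/\log(b/a)$, which still tends to $0$ as $a \downarrow 0$. This is a direct transcription of the two-dimensional Bessel argument, which the paper flags explicitly in Remark~\ref{rem:0-hit}; your exact scale function gives an equality in the exit formula, which is marginally cleaner but not needed. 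More substantively, for $R_0 = 0$ the paper does not argue qualitatively that $\sigma_n \downarrow 0$ a.s.: instead the drift bound $\coth x \geq 1$ gives $R_t \geq t/2 + W_t^R$, hence $\EE \lambda_{1/n} \leq 2/n$ where $\lambda_h := \inf\{t : R_t = h\}$, and Markov's inequality gives $\PP(\lambda_{1/n} \geq n^{-1/2}) \leq 2 n^{-1/2} \to 0$. Your drift-integral divergence argument for $\sigma_n \downarrow 0$ is sound under the natural reading that ``satisfies the SDE'' entails the drift integral being a.s.\ finite (your contradiction does require $R \equiv 0$ on a set of positive Lebesgue measure, which you do obtain); the paper's quantitative version is a bit more self-contained in that it uses only the crude lower bound $\coth x \geq 1$ on the drift.
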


\begin{remark}
\label{rem:0-hit}
Write 
$\coth x := 1/\tanh x$. 
It holds for all $x >0$ that 
$\cosh x > 1$, and then some calculus shows that $\sinh x > x$ for all $x >0$ also.
Then the function $f(x) := x \coth x$ has $f'(x) = \frac{\sinh (2x) - 2x}{2 \sinh^2 x} >0$ for all $x>0$, and $f(0+) = 1$, and hence
\begin{equation}
    \label{eq:coth-lower-bound}
  \coth x > \frac{1}{x} , ~\text{for all } x >0. \end{equation}
So Lemma~\ref{lem:0-hit}
can also be established by a comparison with the 2-dimensional Bessel process.
\end{remark}

While for Lemma~\ref{lem:0-hit}  the asymptotics of $\coth x$ near~$0$  are crucial, 
for the large-$t$ behaviour in Proposition~\ref{prop:lln} also important are the asymptotics of $\coth x$ for large $x$. In that direction, we record some preliminary observations. 
 First note that, since  for all $x \in \RR$,~$\coth x - 1 = 2 (\ee^{2x}-1)^{-1}$,
  \begin{equation}
\label{eq:coth-bounds}
1 < \coth x  < 1+ \frac{1}{x} , \text{ for all } x > 0.\end{equation}
 Then from~\eqref{eq:r-sde} we may write
 \begin{equation}
 \label{eq:alpha-def}
     R_t = R_0  + \frac{t}{2} + W_t^R + \alpha_t, \text{ where }
     \alpha_t := \int_0^t \left( \ee^{2 R_s} -1 \right)^{-1} \ud s, \text{ for all } t \in \RP.
 \end{equation}
In particular since $R_t, \alpha_t \geq 0$ for all $t \in \RP$, 
we get from~\eqref{eq:alpha-def} that 
\begin{equation}
    \label{eq:R-lower-bound}
    R_t   \geq   \frac{t}{2} + W_t^R, \text{ for all } t \in \RP.
\end{equation}
From~\eqref{eq:R-lower-bound}, one already gets the lower bounds on $R_t$ and $\EE R_t$ required for~\eqref{eq:R-BM-approx};
the upper bounds need more work, as set out in the proof of Proposition~\ref{prop:lln} below. First, we prove Lemma~\ref{lem:0-hit}.

\begin{proof}[Proof of Lemma~\ref{lem:0-hit}]
Let $(\Ff_t)_{t \in \RP}$
be the filtration to which $R$ is adapted.
For an arbitrary stopping time $T \in [0,\infty]$ 
with respect to the filtration  $(\Ff_t)_{t \in \RP}$, 
and $x \in \RP$, 
define $\lambda_{T,x} := \inf \{ t\in \RP : R_{T \vee t} = x \}$,
the first hitting time of $x$ after time~$T$ if $T < \infty$.  For definiteness,
we interpret $R_\infty := \limsup_{t \to \infty} R_t$ when $T=\infty$. 
 Fix $0 < a < x_0 < b < \infty$,
 and define the event $E := \{  T < \infty, \, R_T = x_0 \}  \in \Ff_T$, and suppose that~$T$ is such that $\PP (E )=1$. (Later in the proof we take $T$ to be deterministic, or the hitting time by $R$ of a positive level, both of which fulfil the hypotheses imposed on $T$.)
Set $\lambda := \lambda_{T,a} \wedge \lambda_{T,b}$,
and let $g(x) := \log x$.
Then from It\^o's formula applied to~\eqref{eq:r-sde}, using the bound~\eqref{eq:coth-lower-bound} with  $g' (x) = 1/x > 0$ for all $x \in (0,\infty)$,
gives, on the event $E$,
\begin{align*}
g( R_{t \wedge \lambda})
- g(R_T) &
=   \int_T^{t \wedge \lambda} R_u^{-1} \ud R_u - \frac{1}{2} \int_T^{t \wedge \lambda}  R_u^{-2} \ud u \\
& \geq  \int_T^{t \wedge \lambda} R_u^{-1} \ud W_u^R =: M_t ,\end{align*}
where $M := (M_t)_{t \in \RP}$ (which depends on $T$ and $\lambda$) 
has quadratic variation $[M] = ([M]_t)_{t \in \RP}$ which 
satisfies
 $\EE ( [ M ]_t  ) \leq t/a^2 < \infty$, so the continuous local martingale $M$ is a genuine martingale~\cite[p.~38]{KS}. Since $\lambda < \infty$, a.s.,
one has $\lim_{t \to \infty} g (R_{t \wedge \lambda} ) = g (R_\lambda)$, and $\sup_{t \in \RP} | g (R_{t \wedge \lambda} ) | \leq \sup_{x \in [a,b]} | \log x | < \infty$. Hence, by the bounded convergence theorem,
$\EE g (R_\lambda) = \lim_{t \to \infty} \EE g (R_{t \wedge \lambda} )
\geq \EE g (R_T)$,
and, since $g (R_T) = g(x_0)$,
\[ \log x_0 = \EE g (R_T) \leq \EE g (R_\lambda) = \PP ( \lambda_{T,a} < \lambda_{T,b} ) \log a + \PP ( \lambda_{T,a} > \lambda_{T,b} ) \log b . \]
Re-arranging, we obtain the gambler's ruin estimate
\[ \PP ( \lambda_{T,a} < \lambda_{T,b} )  \leq \frac{\log (b/x_0)}{\log (b/a)} . \]
In particular, since $ \PP ( \lambda_{T,0} < \lambda_{T,b} ) \leq \PP (\lambda_{T,a} < \lambda_{T,b})$
for every $a >0$ we take $a \downarrow 0$ to obtain $\PP ( \lambda_{T,0} < \lambda_{T,b} ) = 0$
for every $0 < x_0 < b$ and hence $\PP ( \lambda_{T,0} < \infty ) = \lim_{b \to \infty}\PP ( \lambda_{T,0} < \lambda_{T,b} ) =0$ for every $x_0 > 0$.
Thus we have proved that, for every $x_0 > 0$ and every stopping time $T$
such that $\PP ( T < \infty, \, R_T = x_0 ) =1$,  
\begin{equation}
    \label{eq:post-T-escape}
 \PP ( R_t > 0 \text{ for all } t \geq T ) =1 .\end{equation}
 In particular, if $R_0 = x_0 >0$, the case $T = 0$ of~\eqref{eq:post-T-escape}
verifies the claim in the  lemma.

It remains to consider the case where $R_0 = 0$. Now define $\lambda_h := \inf \{ t \in \RP : R_t = h\}$.
Then, since $0 \leq R_{t \wedge \lambda_h} \leq h$,
we obtain from~\eqref{eq:R-lower-bound} that
$h \geq \EE R_{t \wedge \lambda_h}  \geq (1/2) \EE (t \wedge \lambda_h)$,
for every $t \in \RP$, 
from which, by Fatou's lemma, we obtain $\EE \lambda_{1/n} \leq 2/n$ for all $n \in \NN$.
On the event {$\{ \lambda_{1/n} < n^{-1/2} \}$},
the fact~\eqref{eq:post-T-escape} applied at stopping time $T=\lambda_{1/n}$
shows that $R_t > 0$ for all $t \geq n^{-1/2}$, a.s. 
Hence, by continuity along monotone limits and then Markov's inequality,
\begin{align*} \PP ( R_t >0 \text{ for all } t > 0 )
& = \lim_{n \to \infty} \PP (R_t >0 \text{ for all } t \geq n^{-1/2} ) \\
& \geq 1 - \lim_{n \to \infty} \PP ( \lambda_{1/n} \geq n^{-1/2} )
\geq 1 - \lim_{n \to \infty} n^{1/2}\EE \lambda_{1/n} = 1,
\end{align*}
using the fact that $\EE \lambda_{1/n} \leq 2/n$ for all $n \in \NN$. This completes the proof.
\end{proof}

\begin{proof}[Proof of Proposition~\ref{prop:lln}]
From~\eqref{eq:R-lower-bound} we get immediately that $\liminf_{t \to \infty} t^{-1} R_t \geq 1/2$, a.s. 
In particular, $R_t \to \infty$, a.s., which, together with continuity and Lemma~\ref{lem:0-hit} 
shows that
\begin{equation}
\label{eq:R-stay-positive}
   \text{for every } t >0, ~ \inf_{s \geq t} R_s > 0, \as 
\end{equation}
Using the expression~\eqref{eq:alpha-def} for $R_t$ and for $R_1$, we get
\begin{align*}
\sup_{t \geq 1} \, \Bigl| R_t - R_1 - W_t^R - \frac{t}{2} \Bigr| &
\leq \frac{1}{2} + | W_1^R | +  \sup_{t \geq 1} \left| \alpha_t - \alpha_1 \right|.
\end{align*}
Let $\tau := \sup \{ s \in \RP: s + 2W_s^R \leq 2 \}$, which has $\tau < \infty$, a.s.;
then, by~\eqref{eq:R-lower-bound}, $R_t > 1$ for all $t \geq \tau$.
Using the upper bound in~\eqref{eq:coth-bounds} up to time~$1+\tau$, and the
lower bound~\eqref{eq:R-lower-bound} after time~$\tau$,  
we obtain
\begin{align*}
 \sup_{t \geq 1} \left| \alpha_t - \alpha_1 \right| & \leq \frac{1}{2} \int_1^{1+\tau} R_s^{-1}  \ud s  +
 \int_0^\infty \left( \ee^{\max(2 , s + 2 W_s^R)} -1 \right)^{-1} \ud s := \zeta ,\end{align*}
which satisfies $\zeta < \infty$, a.s., using~\eqref{eq:R-stay-positive} together with the facts that $\tau < \infty$ and $\lim_{ s \to \infty} ( s + 2 W_s^R) = \infty$, a.s. This verifies the first statement in~\eqref{eq:R-BM-approx}, from which it 
follows that $\lim_{t \to \infty} R_t/t = 1/2$, a.s.

To prove convergence also of  $\EE R_t/t$, we show that $R_t/t$, $t \geq 1$, is  uniformly integrable, using a Gr\"onwall idea\footnote{We thank Isao Sauzedde for suggesting such an approach.}. 
First, It\^o's formula shows that for $S_t := R_t^2$,
\[ \ud S_t = \left( 1 + R_t \coth R_t \right) \ud t + 2 R_t \ud W^R_t . \]
Hence, using the upper bound in~\eqref{eq:coth-bounds}, we get, for all $t \in \RP$, 
\[ S_t - S_0 \leq 2  t + \int_0^t    \sqrt{S_u}   \ud u + 2 \int_0^t \sqrt{ S_u} \ud W_u^R.   \]
It follows that, for all $t \in \RP$, 
\begin{equation}
\label{eq:gronwall-1}
\EE (S_t ) - \EE (S_0) \leq 2 t + \int_0^t   \EE \sqrt{S_u}   \ud u 
\leq 2t + \int_0^t \sqrt{ \EE (S_u) } \ud u,
\end{equation}
using the Jensen inequality bound $\EE \sqrt{S_u} \leq  \sqrt{ \EE (S_u) }$. 
Write 
\[ g(t) := 2t + \int_0^t  \sqrt{ \EE (S_u) } \ud u, ~\text{and}~ h(t) := \exp \left( \sqrt{ \EE(S_0) + g(t)} \right). \]
Then we can re-write~\eqref{eq:gronwall-1} as
$(g'(t) - 2)^2 = \EE (S_t) \leq \EE (S_0) + g(t)$, and hence, since $g'(t) \geq 2$,
\[ g'(t) \leq 2 + \sqrt{ \EE (S_0) + g(t)} \leq 2 \sqrt{ \EE (S_0) + g(t)},\]
provided $t \geq 2$, since $g(t) \geq 2t$.
Then, by differentiating $h$, we get
\[ \frac{h'(t)}{h(t)} = \frac{g'(t)}{2 \sqrt{ \EE (S_0) + g(t)}} \leq 1, \text{ for all } t \geq 2.
\]
Then, by Gr\"onwall's lemma,
$h(t) \leq h(2) \ee^{t}$ for all $t \geq 2$. By taking logarithms,  we get, for all $t \geq 2$,
\[ \sqrt{g(t)} \leq \sqrt{ \EE (S_0) + g(t) } \leq t + C ,\]
where $C:= \log h(2) < \infty$ is a function of $\EE (S_0)$. Hence $g(t) \leq (t+C)^2$, and, by~\eqref{eq:gronwall-1}, we get that $\EE (S_t) \leq 2t^2$, say, for all $t$ large enough. Consequently $\sup_{t \geq 1} \EE ( (R_t/t)^2 ) < \infty$, meaning that $R_t/t$, $t \geq 1$, is uniformly integrable. Hence  $\lim_{t\to \infty} R_t/t = 1/2$ implies that $\lim_{t\to \infty} \EE R_t/t = 1/2$, also.

We now turn to the convergence of the angular component $\theta$.
To this end, we fix $s >0$ and consider the post-time-$s$ winding process $\Theta^{(s)}$, where we have
 $\Theta^{(s)}_s =0 $ and, 
from~\eqref{eq:theta-sde}, that
$( \Theta^{(s)}_t )_{t \geq s}$
is a local martingale with quadratic variation given by
the increasing process
\begin{equation}
    \label{eq:theta-bracket} [ \Theta^{(s)} ]_t := \int_s^t  \frac{\ud u}{\sinh^2 R_u} , \text{ for } t \geq s. 
\end{equation}
By~\eqref{eq:R-stay-positive},
the integral in~\eqref{eq:theta-bracket} is finite for all $t \geq s$.
Moreover, 
from the fact that $R_t /t \to 1/2$ and~\eqref{eq:theta-bracket}, we deduce that
$[ \Theta^{(s)} ]_\infty := \sup_{t \geq s}   [ \Theta^{(s)} ] _t   < \infty$, a.s.
Since $(\Theta^{(s)}_t  )_{t \geq s}$ is a continuous local martingale, vanishing at~$0$,
the Dambis--Dubins--Schwarz theorem~\cite[pp.~174--5]{KS}  implies that it is a time-change of a one-dimensional Brownian motion $\tW^{(s)}$, say:
 $\Theta^{(s)}_t   = \tW^{(s)}_{[ \Theta^{(s)} ]_t}$, for all $t \geq s$.
Hence we get, for every $s > 0$, 
$\lim_{t \to \infty} \Theta^{(s)}_t =   \tW^{(s)}_{[\Theta^{(s)}]_\infty} =: \Theta^{(s)}_\infty$, a.s.
Since $\theta_t = \theta_s + \Theta^{(s)}_t$, reduced modulo $2\pi$, for all $t \geq s$,
this means that 
$\lim_{t \to \infty} \theta_t = ( \theta_s + \Theta^{(s)}_\infty)$ modulo $2\pi$ exists, a.s.
Moreover, $\lim_{s \to \infty} [\Theta^{(s)}]_\infty = 0$, a.s., so $\lim_{s \to \infty} \Theta^{(s)}_\infty = 0$ in probability. Since also $\theta_s \sim \unif[0,2\pi)$, by taking $s \to \infty$ 
in $\theta_\infty =  ( \theta_s + \Theta^{(s)}_\infty)$ modulo $2\pi$, 
we see
that $\theta_\infty \sim \unif[0,2\pi)$ as well.

Finally, an extension of the preceding argument  quantifies the rate of convergence of $\theta$. 
We give the verification of~\eqref{eq:angular-rate} in the case $s=1$; the general case  works the same way. 
Recall the definition of $\alpha_t$ from~\eqref{eq:alpha-def}. Then
$0 \leq \alpha_t \uparrow \alpha_\infty$ where $\alpha_\infty < \infty$, a.s. Consider the representation 
 $\Theta^{(s)}_t   = \tW^{(s)}_{[ \Theta^{(s)} ]_t}$,  $t \geq s$,
 for the fixed choice $s=1$,
 and, for ease of notation, write simply $\Theta \equiv \Theta^{(1)}$
 and 
  $ \tW \equiv  \tW^{(1)}$ 
  for the rest of this proof. 
 Then define $W'_u := \tW_{[\Theta]_\infty} - \tW_{[\Theta]_\infty-u}$, $u \in \RP$. Here $\tW$ is independent of $[\Theta]$, so $W'_u$ is a Brownian motion
(formally, extend $\tW$ to a two-sided Brownian motion indexed by $\RR$). We are interested, though, in $u \to 0$, since, for all $t \geq 1$, 
\[ \Theta_\infty - \Theta_t = \tW_{[\Theta]_\infty} - \tW_{[\Theta]_t} 
= W'_{\varrho_t}, \text{ where } \varrho_t := {[\Theta]_\infty - [\Theta]_t} =  \int_t^\infty \frac{\ud u}{\sinh^2 R_u} ,
\]
from the $s=1$ case of~\eqref{eq:theta-bracket}.
Since $R_0 =0$, we have from~\eqref{eq:alpha-def} that, as $t \to \infty$,
\[ \sinh R_t = \frac{\ee^{\alpha_\infty}}{2} (1+ o(1)) \ee^{t/2} \ee^{W_t^R} , \as  \]
Writing $W := - W^R$, it follows that
\begin{equation}
\label{eq:rho-tail}
 \varrho_t = 4 (1+o(1)) \ee^{-2\alpha_\infty} \oEe_t, \text{ where } \oEe_t := \int_t^\infty \ee^{2 W_s - s} \ud s .\end{equation}
A crude bound is that, for every $\varepsilon \in (0,1/2)$,
there exists a (random) $t_\varepsilon$ with $t_\varepsilon < \infty$, a.s., such that, for all $t \geq t_\varepsilon$,  
\[  
\oEe_t <  \int_t^\infty \ee^{ - (1-(\varepsilon/2)) s} \ud s \leq \ee^{-(1-\varepsilon) t}  . \]
It follows that, for $t \geq t_\varepsilon$,
\[ \int_t^{2t} \ee^{2W_s-s} \ud s \leq \oEe_t \leq \ee^{-2(1-\varepsilon) t} +  \int_t^{2t} \ee^{2W_s-s} \ud s,
\]
For every $\delta >0$, it holds that, a.s., $\sup_{s \in [t, 2t]}|  W_s | \leq t^{(1/2)+\delta}$ for all $t$ large enough. It follows from these estimates, together with~\eqref{eq:rho-tail}, that we have the $\log$-asymptotic for $\varrho_t$:
\begin{equation}
\label{eq:rho-log-asymptotic}
\lim_{t \to \infty} \frac{ \log (1/\varrho_t )}{t} = 
\lim_{t \to \infty} \frac{ \log (1/\oEe_t) }{t} = 1, \as
\end{equation}
A consequence of 
Khinchin's small-time law of the iterated logarithm~\cite[p.~112]{KS}
is that 
\begin{equation}
\label{eq:khinchin}
\liminf_{u \to 0}  \frac{\log (1/|W'_u|)}{ \log (1/u)} = \frac{1}{2}, \as 
    \end{equation}
Combining~\eqref{eq:rho-log-asymptotic} and~\eqref{eq:khinchin} we obtain
\begin{equation}
    \label{eq:Theta-convergence}
 \limsup_{t \to \infty} \frac{\log | \Theta_\infty - \Theta_t |}{t} = 
 - \liminf_{t \to \infty}  \frac{\log (1/|W'_{\varrho_t}|)}{  t} 
= - \liminf_{t \to \infty}  \frac{\log (1/|W'_{\varrho_t}|)}{ \log (1/\varrho_t)}
=  - \frac{1}{2}, \as \end{equation}
For $t >1$, we can write $\theta_t = \theta_1 + \Theta_t + 2\pi k_t$ for some $k_t \in \ZZ$, and similarly, for $s >1$, $\theta_s = \theta_1 + \Theta_s + 2\pi k_s$ for some $k_s \in \ZZ$. Hence, $\theta_t - \theta_s = \Theta_t - \Theta_s + 2\pi (k_t-k_s)$. As shown above, for every $\varepsilon>0$, there is some $T < \infty$ such that $| \Theta_t-\Theta_s | < \varepsilon$ for all $s, t \geq T$. Since $\theta_t \to \theta_\infty$, a.s., and $\theta_\infty \sim \unif[0,2\pi)$, it also holds that, a.s., $| \theta_t - \theta_s | < \varepsilon$ for all $s, t \geq T'$ for some $T' < \infty$. Hence $k_t - k_s =0$ for all $t,s$ large enough, and so the asymptotic
result for $|\theta_\infty -\theta_t|$ in~\eqref{eq:angular-rate} is deduced from the
corresponding result for $|\Theta_\infty -\Theta_t|$ in~\eqref{eq:Theta-convergence}. 
\end{proof}

\begin{remark}
    \label{rem:uniform-angle}
    Here is an alternative argument to establish that $\theta_\infty \sim \unif [0,2\pi)$, linking to the appearance of the Cauchy limit in Lemma~\ref{lem:cauchy}, rather than using directly the rapid spinning and uniform entrance properties.
    A direct calculation shows that if $\varphi \sim \unif [0,2\pi)$, then $\tan \varphi$ is standard Cauchy;
    moreover, $2 \varphi$ reduced modulo $2\pi$ is also $\unif [0,2\pi)$. Thus from Lemma~\ref{lem:cauchy},
\begin{equation}\label{eq:tanuniform}
X_\infty \eqd \tan ( k \varphi), \text{ for } \varphi \sim \unif [0,2\pi),
\end{equation}
 for every positive integer $k$. 
Thus it follows from \eqref{eq:UVinf} that the limiting angle $\theta_\infty$ satisfies
\[ \tan \theta_\infty = \frac{X_\infty^2 -1}{2 X_\infty} \eqd \frac{\tan^2\varphi -1}{2 \tan \varphi} = -1/\tan (2\varphi) \eqd 1 / X_\infty, \]
by~\eqref{eq:tanuniform} applied in two places. 
Since $X_\infty$ is standard Cauchy, i.e., has density $g(x) = (\pi (1+x^2))^{-1}$ over $x \in \RR$,
a change of variable shows that the density of $\tan \theta_\infty$ is given by
$$
f (x) = x^{-2} g (-1/x) = \frac{1}{\pi (1+x^{-2})}  \cdot \frac{1}{x^2} = \frac{1}{\pi(1+x^2)},
$$
for all $x \in \RR$; i.e.,  $\tan \theta_\infty$ also has a standard Cauchy distribution. Thus, by inverting~\eqref{eq:tanuniform}, we obtain that $\theta_\infty \sim \unif [0,2\pi)$ (recall that, by definition, $\theta_\infty \in [0,2\pi))$.
\end{remark}

\appendix

\section{Exact expectation formulae}
\label{sec:exact}

This short appendix
derives some exact formulae by an appeal to some of the 
impressive exact-distributional results that form one of the main
highlights of the theory of the exponential functionals as discussed in~\S\ref{sec:exponential}. In particular we use results from~\cite{yor92,yor01} (see references therein for prior contributions).
First we give a proof of the
formula from Corollary~\ref{cor:yor} for $\EE L_{T_\lambda}$, the perimeter length at random exponential time~$T_\lambda$.

\begin{proof}[Proof of Corollary~\ref{cor:yor}]
    In Theorem~1 of~\cite[Ch.~6, p.~95]{yor01}, it is shown that
    $\Ee_{T_\lambda}$ has the same distribution as $\beta_{1,a} /(2\gamma_b)$, where in the latter $a, b \in (0,\infty)$ are parameters derived from $\lambda$,     
    $\beta_{1,a}, \gamma_b$ are independent
    random variables, $\beta_{1,a} \sim \textrm{Beta} (1,a)$ has density  $a (1-u)^{a-1}$ on $u \in [0,1]$, and $\gamma_b \sim \textrm{Gamma} (b)$ has density $\Gamma(b)^{-1}\ee^{-s} s^{b-1}$ on $s \in \RP$;
    here $\Gamma$ is the Euler gamma function. The constants $a,b$ are given by $4 a = x - 1$ and $4 b= x + 1$ where $x:=  \sqrt{8 \lambda + 1}$. Then, by independence,
    \[ \EE \sqrt{ \Ee_{T_\lambda} } = \frac{1}{\sqrt{2}} \cdot  \EE \sqrt{ \beta_{1,a} } \cdot \EE \sqrt{1/\gamma_b}, \]
    where, via beta--gamma calculus, and the facts that $y \Gamma (y) = \Gamma (y+1)$ and $\Gamma (1/2) = \sqrt{\pi}$,
    \begin{align*} \EE \sqrt{ \beta_{1,a} } & = a \int_0^1 (1-u)^{a-1} \sqrt{ u }  \ud u = 
    \frac{a \Gamma(a) \Gamma \bigl( \tfrac{3}{2} \bigr)}{\Gamma \bigl( a + \tfrac{3}{2} \bigr)} =    \frac{\sqrt{\pi} \Gamma (a+1)}{2\Gamma \bigl( a + \tfrac{3}{2} \bigr)} , \text{ and }\\
    \EE \sqrt{1/\gamma_b} & = \frac{1}{\Gamma (b)} \int_0^\infty \ee^{-s} s^{b-1}  s^{-1/2} \ud s = \frac{\Gamma \bigl( b - \tfrac{1}{2} \bigr)}{\Gamma (b) } .
    \end{align*}
    From these formulas, the values $a = \tfrac{x-1}{4}$ and $b = \tfrac{x+1}{4}$, and Theorem~\ref{thm:expectation}, we obtain
    \[ \EE L_{T_\lambda} = 
    \pi \cdot\frac{\Gamma \bigl(\tfrac{x+3}{4} \bigr) \Gamma \bigl( \tfrac{x-1}{4} \bigr)}{\Gamma \bigl(\tfrac{x+5}{4} \bigr) \Gamma \bigl( \tfrac{x+1}{4} \bigr)} 
=  \pi  \left( \frac{x-1}{x+1} \right) \left( \frac{\Gamma \bigl( \tfrac{x-1}{4} \bigr)}{ \Gamma \bigl( \tfrac{x+1}{4} \bigr)} \right)^2
    ,\]
    as claimed in~\eqref{eq:cor-yor}. The stated $\lambda \to 0$ asymptotics follow from the facts that $\sqrt{1 + 8 \lambda} -1 \sim 4\lambda$ and $\lambda \Gamma (\lambda) \to 1$ as $\lambda \to 0$ (from above). The $\lambda \to \infty$ asymptotics follow from Stirling's formula: $\Gamma (x) \sim (2\pi x)^{1/2} x^x \ee^{-x}$ as $x \to \infty$.
\end{proof}

Following~\cite{yor92} (among others), we write
\[ A_t^{(\nu)} := \int_0^t \exp (2 W_s + 2 \nu s ) \ud s. \]
Then $A_t^{(-1/2)} = \Ee_t$ as given at~\eqref{eq:xi-clean}.
 As explained in~\cite[p.~510]{yor92}, a Girsanov transformation gives
\[ \EE \bigl[ \bigl( A_t^{(\nu)} \bigr)^{1/2} \bigr] = \ee^{-\nu^2 t/2} \EE \bigl[ \sqrt{A_t} \exp ( \nu W_t ) \bigr] , \]
where $A_t := A^{(0)}_t$. 
Yor~\cite{yor92} gives an expression for the conditional density of $A_t$ given $W_t$,
and deduces a formula for certain mixed moments, which gives
\begin{align*}
    \EE \bigl[ \bigl( A_t^{(\nu)} \bigr)^{1/2} \bigr] & = ( 2 \pi^2 t)^{-1/2} \exp \left( \frac{\pi^2}{2t} \right) \int_0^\infty y^\nu \ud y \int_0^\infty v^{-1/2} \exp \left( - \frac{v}{2} (1+y^2) \right) \psi_{yv} (t) \ud v, 
\end{align*}
(obtained from equation (6.e) of~\cite[p.~528]{yor92} with $f(y) = y^\nu$ and $g(z) = \sqrt{z}$), where
\begin{equation}
    \label{eq:psi-def}
 \psi_u (t) :=  \int_0^\infty \exp \left( - \frac{z^2}{2 t} \right) \exp \left( -u \cosh z \right) \sin \left( \frac{\pi z}{t} \right) \sinh z \ud z. \end{equation}
So taking $\nu = -1/2$ and using Theorem~\ref{thm:expectation} gives the multiple-integral formula
\begin{align} 
\label{eq:exact-formula-yor}
  \EE L_t = \sqrt{8 \pi} \EE \sqrt{ \Ee_t }   & = \frac{2}{\sqrt{\pi t}}  \exp \left( \frac{\pi^2}{2t} \right) \int_0^\infty  \frac{\ud y}{\sqrt{y}} \int_0^\infty   \exp \left( - \frac{v}{2} (1+y^2) \right) \psi_{yv} (t) \frac{\ud v}{\sqrt{v}}, 
\end{align}
where~$\psi$ is defined at~\eqref{eq:psi-def}.

\section*{Acknowledgements}
\addcontentsline{toc}{section}{Acknowledgements}

 CB was supported in part by the German Research Foundation (DFG) Project 531540467.
 AW is supported by EPSRC grant EP/W00657X/1.   Some of this work was carried
 out during the programme ``Stochastic systems for anomalous diffusion'' (July--December 2024) hosted by the  Isaac Newton Institute, under EPSRC grant EP/Z000580/1. The authors thank Isao Sauzedde   and two anonymous referees for their comments and helpful suggestions.

 \bibliographystyle{abbrvnat} 
 \setlength{\bibsep}{4pt plus 0.4ex}
\bibliography{BVW_hyperbolic_hulls.bib}

\end{document}